%
%
%
\documentclass{amsart}
\usepackage{amsmath}
\usepackage{amssymb}
\usepackage{amscd}
\usepackage{mathrsfs}

\newtheorem{theorem}{Theorem}[section]
\newtheorem{proposition}[theorem]{Proposition}
\newtheorem{lemma}[theorem]{Lemma}

\theoremstyle{definition}
\newtheorem{definition}[theorem]{Definition}
\newtheorem{theorem-definition}{Theorem-Definition}[theorem]
\newtheorem{notation}[theorem]{Notation}
\newtheorem{example}[theorem]{Example}

\theoremstyle{remark}
\newtheorem{remark}[theorem]{Remark}
\newtheorem*{acknowledgments}{Acknowledgments}

\numberwithin{equation}{section}

\newcommand{\Z}{{\mathbb{Z}}}
\newcommand{\Q}{{\mathbb{Q}}}
\newcommand{\N}{{\mathbb{N}}}
\newcommand{\T}{{\mathbb{T}}}
\newcommand{\R}{{\mathbb{R}}}
\newcommand{\C}{{\mathbb{C}}}
\newcommand{\Qadic}[1]{{\Z\left[\frac{1}{#1}\right]}}
\newcommand{\Qadicsq}[1]{{\Qadic{#1}\times\Qadic{#1}}}

\newcommand{\solenoid}{{\mathscr{S}}}
\newcommand{\algebra}{\mathscr{A}^{\solenoid}}
\newcommand{\bigslant}[2]{{\raisebox{.2em}{$#1$}\left/\raisebox{-.2em}{$#2$}\right.}}
\newcommand{\dual}[1]{{\widehat{#1}}}


\begin{document}

\title{Noncommutative solenoids and their projective modules}

\author{Fr\'{e}d\'{e}ric Latr\'{e}moli\`{e}re}
\address{Department of Mathematics, University of Denver, 80208}
\email{frederic@math.du.edu}
\author{Judith A. Packer}
\address{Department of Mathematics, Campus Box 395, University of Colorado, Boulder, CO, 80309-0395}
\email{packer@euclid.colorado.edu}
\subjclass{Primary 46L40, 46L80; Secondary 46L08, 19K14}
\date{March 31, 2013}

\keywords{C*-algebras; solenoids; projective modules; $p$-adic analysis}

\begin{abstract}
Let $p$ be prime. A noncommutative $p$-solenoid is the $C^{\ast}$-algebra of $\Qadicsq{p}$ twisted by a multiplier of that group, where $\Qadic{p}$ is the additive subgroup of the field $\mathbb{Q}$ of rational numbers whose denominators are powers of $p$. In this paper, we survey our classification of these $C^{\ast}$-algebras up to *-isomorphism in terms of the multipliers on $\Qadicsq{p}$, using  techniques from noncommutative topology. Our work relies in part on writing these $C^{\ast}$-algebras as direct limits of rotation algebras, i.e. twisted group C*-algebras of the group $\Z^2,$ thereby providing a mean for computing the $K$-theory of the noncommutative solenoids, as well as the range of the trace on the $K_0$ groups. We also establish a necessary and sufficient condition for the simplicity of the noncommutative solenoids.  Then, using the computation of the trace on $K_0$, we discuss two different ways of constructing projective modules over the noncommutative solenoids.

\end{abstract}

\maketitle


\section{Introduction}
Twisted group algebras and transformation group $C^{\ast}$-algebras have been studied since the early 1960's \cite{Kleppner65} and provide a rich source of examples and problems in C*-algebra theory. Much progress has been made in studying such $C^{\ast}$-algebras when the groups involved are finitely generated (or compactly generated, in the case of Lie groups).  Even when $G=\mathbb Z^n,$ these $C^{\ast}$-algebras give a rich class of examples which have driven much development in C*-algebra theory, including the foundation of noncommutative geometry by Connes \cite{Connes80}, the extensive study of the geometry of quantum tori by Rieffel \cite{Rieffel81,Rieffel88,Rieffel90,Rieffel98a}, the expansion of the classification problem from AF to AT algebras by G. Elliott and D. Evans \cite{Elliott93b}, and many more (L. Baggett and A. Kleppner \cite{BK76}, and S. Echterhoff and J. Rosenberg \cite{ER95}).

In this paper, we present our work on twisted group $C^\ast$-algebras of the Cartesian square of the discrete group $\Qadic{p}$ of $p$-adic rationals, i.e. the additive subgroup of $\Q$ whose elements have denominators given by powers of a fixed $p \in \N,p\geq 1$. The Pontryagin duals of these groups are the $p$-solenoid, thereby motivating our terminology in calling these $C^{\ast}$-algebras 
 {\it noncommutative solenoids}.  We review our computation of the $K$-groups of these $C^{\ast}$-algebras, derived in their full technicality in \cite{Latremoliere11c}, and which in and of itself involves an intriguing problem in the theory of Abelian group extensions. We were also able to compute the range of the trace on the $K_0$-groups, and use this knowledge to classify these $C^{\ast}$-algebras up to $\ast$-isomorphism, in \cite{Latremoliere11c}, and these facts are summarized in a brief survey of \cite{Latremoliere11c} in the first two sections of this paper.  

This paper is concerns with the open problem of classifying noncommutative solenoids up to Morita equivalence. We demonstrate a method of constructing an equivalence bimodule between two noncommutative solenoids using methods due to M. Rieffel \cite{Rieffel88}, and will note how this method has relationships to the theory of wavelet frames. These new matters occupy the last three sections of this paper.

\begin{acknowledgments}
The authors gratefully acknowledge helpful conversations with  
Jerry Kaminker and Jack Spielberg.
\end{acknowledgments}


\section{Noncommutative Solenoids}

This section and the next provide a survey of the main results proven in \cite{Latremoliere11c} concerning the computation of the $K$-theory of noncommutative solenoids and its application to their classification up to *-isomorphism. An interesting connection between the $K$-theory of noncommutative solenoids and the $p$-adic integers is unearthed, and in particular, we prove that the range of the $K_0$ functor on the class of all noncommutative solenoids is fully described by all Abelian extensions of the group of $p$-adic rationals by $\Z$. These interesting matters are the subject of the next section, whereas we start in this section with the basic objects of our study.

We shall fix, for this section and the next, an arbitrary $p \in \N$ with $p > 1$. Our story starts with the following groups:

\begin{definition}
Let $p\in\N,p>1$. The group $\Qadic{p}$ of $p$-adic rationals is the inductive limit of the sequence of groups:
\begin{equation*}
\begin{CD}
\Z @>z\mapsto p z>> \Z @>z\mapsto p z>> \Z @>z\mapsto p z>> \Z @>z\mapsto p z>> \cdots
\end{CD}
\end{equation*}
which is explicitly given as the group:
\begin{equation}\label{NadicRationals}
\Qadic{p} = \left\{ \frac{z}{p^k} \in \Q : z\in\Z, k\in\N \right\}
\end{equation}
endowed with the discrete topology.
\end{definition}
From the description of $\Qadic{p}$ as an injective limit, we obtain the following result by functoriality of the Pontryagin duality. We denote by $\T$ the unit circle $\{ z \in \C : |z|=1\}$ in the field $\C$ of complex numbers.
\begin{proposition}\label{solenoids}
Let $p\in\N,p>1$. The Pontryagin dual of the group $\Qadic{p}$ is the $p$-solenoid group, given by:
\[
\solenoid_p = \left\{ (z_n)_{n\in\N} \in \T^{\N}: \forall n \in \N \;\; z_{n+1}^p = z_n \right\}\text{,}
\]
endowed with the induced topology from the injection $\solenoid_p\hookrightarrow \T^{\N}$. The dual pairing between $\Q_N$ and $\solenoid_N$ is given by:
\[
\left< \frac{q}{p^k}, (z_n)_{n\in\N} \right> = z_k^q \text{,}
\]
where $\frac{q}{p^k} \in \Qadic{p}$ and $(z_n)_{n\in\N}\in\solenoid_p$.
\end{proposition}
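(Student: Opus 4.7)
The plan is to apply Pontryagin duality directly to the inductive limit description of $\Qadic{p}$ given in the preceding definition. Pontryagin duality is a contravariant equivalence between the categories of discrete and compact locally compact abelian groups (and more generally carries inductive systems to projective ones), so dualizing the inductive system
\[
\Z \xrightarrow{\ z\mapsto pz\ } \Z \xrightarrow{\ z\mapsto pz\ } \Z \xrightarrow{\ z\mapsto pz\ } \cdots
\]
yields $\dual{\Qadic{p}}$ as the projective limit of the dualized diagram.

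The first concrete step is to identify the dual of each transition map. Under the standard identification $\dual{\Z}\cong\T$ via $w\leftrightarrow (n\mapsto w^n)$, the dual of multiplication by $p$ on $\Z$ is the map $\T\to\T,\ w\mapsto w^p$, since for any character $\chi_w(n)=w^n$ one has $\chi_w(pn)=(w^p)^n$. Hence $\dual{\Qadic{p}}$ is isomorphic to the projective limit
\[
\cdots \xrightarrow{z\mapsto z^p} \T \xrightarrow{z\mapsto z^p} \T \xrightarrow{z\mapsto z^p} \T,
\]
which by the universal property of projective limits is exactly the subgroup
\[
\left\{(z_n)_{n\in\N}\in\T^{\N} : z_{n+1}^p=z_n\ \text{for all}\ n\in\N\right\}=\solenoid_p,
\]
endowed with the subspace topology coming from the product topology on $\T^{\N}$, which is the topology arising from the projective limit construction.

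Next I would verify the pairing formula. An element $(z_n)_{n\in\N}\in\solenoid_p$ corresponds, at level $k$ of the inductive system, to the character $\chi_{z_k}:n\mapsto z_k^n$ of the $k$-th copy of $\Z$. Since $\frac{q}{p^k}\in\Qadic{p}$ is the image of $q\in\Z$ from the $k$-th stage, the induced character evaluates to $\left\langle \frac{q}{p^k},(z_n)_{n\in\N}\right\rangle = z_k^q$, giving the claimed formula. The only thing to check is that this is well-defined under the identification $\frac{q}{p^k}=\frac{qp}{p^{k+1}}$ obtained by going one further step in the inductive system; this is immediate from the compatibility $z_{k+1}^p=z_k$, since $z_{k+1}^{qp}=(z_{k+1}^p)^q=z_k^q$.

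The main obstacle is really bookkeeping rather than mathematics: one must keep track of the direction of the arrows under Pontryagin duality and of the identification of $\Qadic{p}$ with the inductive limit in a way that is compatible with the chosen parametrization $\frac{z}{p^k}$. No deep input is required beyond Pontryagin duality for discrete abelian groups and the explicit description of $\dual{\Z}=\T$.
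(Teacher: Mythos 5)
Your proposal is correct and follows the same route the paper indicates: dualizing the inductive system $\Z \xrightarrow{z\mapsto pz} \Z \xrightarrow{z\mapsto pz} \cdots$ by functoriality of Pontryagin duality to obtain $\solenoid_p$ as the projective limit of $\T \xleftarrow{z\mapsto z^p} \T$, with the pairing read off stage by stage. Your explicit check that the pairing is well defined under $\frac{q}{p^k}=\frac{qp}{p^{k+1}}$ is a nice touch, but the argument is essentially identical to the paper's.
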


We study in \cite{Latremoliere11c} the following C*-algebras.
\begin{definition}\label{ncsolenoid-def}
A \emph{noncommutative solenoid} is a C*-algebra of the form
\begin{equation*}
C^\ast\left(\Qadicsq{p},\sigma\right)\text{,}
\end{equation*}
where $p$ is a natural number greater or equal to $2$ and $\sigma$ is a multiplier of the group $\Qadicsq{p}$.
\end{definition}

The first matter to attend in the study of these C*-algebras is to describe all the multipliers of the group $\Qadicsq{p}$ up to equivalence, where our multipliers are $\T$-valued unless otherwise specified, with $\T$ the unit circle in $\C$. Note that the group $\Qadic{p}$ has no nontrivial multiplier, so our noncommutative solenoids are the natural object to consider.

Using \cite{Kleppner65}, we compute in \cite{Latremoliere11c} the group $H^2\left(\Qadicsq{p},\T\right)$ of $\T$-valued multipliers of $\Qadicsq{p}$ up to equivalence, as follows:

\begin{theorem}\cite[Theorem 2.3]{Latremoliere11c}\label{multipliers}
Let $p\in\N,p>1$. Let:
\begin{equation*}
\Xi_p = \left\{ \left( \alpha_n \right) : 
\alpha_0 \in [0,1) \,\,\wedge\,\, (\forall n\in \N \,\, \exists k \in \{0,\ldots,N-1\}\;\; p \alpha_{n+1} =  \alpha_n+k )
\right\}
\end{equation*}
which is a group for the pointwise addition modulo one. There exists a group isomorphism $\rho : H^2\left(\Qadicsq{p},\T\right) \rightarrow \Xi_p$ such that if $\sigma\in H^2\left(\Qadicsq{p},\T\right)$ and $\alpha=\rho(\sigma)$, and if $f$ is a multiplier of class $\sigma$, then $f$ is cohomologous to:
\begin{equation*}
\Psi_\alpha:\left(\left(\frac{q_1}{p^{k_1}},\frac{q_2}{p^{k_2}}\right),\left(\frac{q_3}{p^{k_3}},\frac{q_4}{p^{k_4}}\right)\right) \longmapsto \exp\left(2i\pi \alpha_{(k_1+k_4)}q_1q_4\right)\text{.}
\end{equation*}
\end{theorem}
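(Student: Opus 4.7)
The strategy is to realize $\Qadicsq{p}$ as a directed union of copies of $\Z^2$, transfer the cohomology computation to an inverse limit, and then unwind the resulting compatibility condition. Writing $\Qadic{p}=\bigcup_{n\in\N}\frac{1}{p^n}\Z$ inside $\Q$, the group $\Qadicsq{p}$ is the union, directed by componentwise order on $\N^2$, of the subgroups $G_{n,m}=\frac{1}{p^n}\Z\times\frac{1}{p^m}\Z\cong\Z^2$. Under this identification, the inclusion $G_{n,m}\hookrightarrow G_{n',m'}$ becomes the map $(a,b)\mapsto(p^{n'-n}a,p^{m'-m}b)$. I would then recall the classical identification $H^2(\Z^2,\T)\cong\T$ realized by the canonical $2$-cocycle $\sigma_\theta((a,b),(c,d))=\exp(2\pi i\theta ad)$; a direct computation shows that the pullback of $\sigma_\theta$ along $(a,b)\mapsto(p^sa,p^tb)$ is cohomologous to $\sigma_{p^{s+t}\theta}$.

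Since a multiplier on $\Qadicsq{p}$ is determined, up to coboundary, by a coherent system of multiplier classes on the $G_{n,m}$'s, one obtains an isomorphism
\begin{equation*}
H^2(\Qadicsq{p},\T)\;\cong\;\varprojlim_{(n,m)\in\N^2}\T\text{,}
\end{equation*}
with transition maps given by multiplication by $p^{(n'-n)+(m'-m)}$. A coherent family $(\theta_{(n,m)})$ then satisfies $p\theta_{(n+1,m)}\equiv\theta_{(n,m)}\equiv p\theta_{(n,m+1)}\pmod{1}$; combining the two lifts via the coherence relation at $(n+1,m+1)$ forces $\theta_{(n+1,m)}=\theta_{(n,m+1)}$ in $[0,1)$, so $\theta_{(n,m)}$ depends only on $k=n+m$. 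Setting $\alpha_k:=\theta_{(n,m)}$ recovers precisely the defining relation $p\alpha_{k+1}\equiv\alpha_k\pmod{1}$ of $\Xi_p$, which yields the desired isomorphism $\rho$.

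Finally, I would check that $\Psi_\alpha$ realizes the class corresponding to $(\alpha_k)$. Lifting $(q_1/p^{k_1},q_2/p^{k_2})$ and $(q_3/p^{k_3},q_4/p^{k_4})$ into any $G_{n,m}$ with $n\geq\max(k_1,k_3)$ and $m\geq\max(k_2,k_4)$, and evaluating $\sigma_{\alpha_{n+m}}$, one obtains $\exp(2\pi i\alpha_{n+m}q_1q_4p^{(n-k_1)+(m-k_4)})$; the telescoped relation $p^\ell\alpha_{j+\ell}\equiv\alpha_j\pmod{1}$, obtained by iterating the defining relation of $\Xi_p$, reduces this to $\exp(2\pi i\alpha_{k_1+k_4}q_1q_4)$, which is $\Psi_\alpha$. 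The $2$-cocycle identity for $\Psi_\alpha$ is then a short direct calculation. The main obstacle in this plan is the surjectivity half of the inverse-limit identification, namely exhibiting, out of a coherent system of cohomology classes on the $G_{n,m}$'s, an actual $\T$-valued $2$-cocycle on $\Qadicsq{p}$; this requires a globally consistent choice of cocycle representatives across the $G_{n,m}$, and is where the techniques from \cite{Kleppner65} have to be invoked.
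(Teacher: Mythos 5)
Your reduction to the inverse system is sound as bookkeeping: the identification of the restriction of a class to $G_{n,m}=\frac{1}{p^n}\Z\times\frac{1}{p^m}\Z\cong\Z^2$ with an angle $\theta_{(n,m)}\in\T$, the observation that coherence forces $\theta_{(n,m)}$ to depend only on $n+m$ and to satisfy $p\alpha_{k+1}\equiv\alpha_k \pmod 1$, and the check that $\Psi_\alpha$ restricts on each $G_{n,m}$ to $\sigma_{\alpha_{n+m}}$ are all correct. The genuine gap is the sentence you state without proof: that a multiplier of $\Qadicsq{p}$ is determined up to coboundary by the coherent system of its restricted classes, i.e.\ that the restriction map $H^2(\Qadicsq{p},\T)\to\varprojlim_{(n,m)}H^2(G_{n,m},\T)$ is \emph{injective}. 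Moreover you mislocate the difficulty: surjectivity is the easy half here --- your explicit cocycles $\Psi_\alpha$ already realize every coherent family, and in any case for a countable increasing union the restriction map is onto the inverse limit by the Milnor sequence. What is not automatic is that a multiplier of $\Qadicsq{p}$ whose restriction to every $G_{n,m}$ is a coboundary is itself a coboundary: the trivializing $1$-cochains on the $G_{n,m}$ must be chosen compatibly, and the obstruction is exactly a ${\varprojlim}^1 H^1(G_{n,m},\T)={\varprojlim}^1(\T^2)$ term. To close the argument you must show this vanishes, e.g.\ by Mittag--Leffler, since the transition maps $\T^2\to\T^2$ are the duals of $(a,b)\mapsto(p^s a,p^t b)$ and hence surjective (or by compactness of $\T^2$). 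Without that step, the surjectivity of $\alpha\mapsto[\Psi_\alpha]$ onto $H^2(\Qadicsq{p},\T)$ --- which is the substance of the theorem --- is not established.

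Note also that appealing at the end to ``techniques from \cite{Kleppner65}'' essentially concedes the point to a different proof: the paper's own argument (in \cite{Latremoliere11c}) does not pass through limits of cohomology at all, but uses Kleppner's theorem that every multiplier of a discrete abelian group is cohomologous to a bicharacter whose antisymmetrization determines the class, and then computes the bicharacters of $\Qadicsq{p}$, i.e.\ homomorphisms $\Qadicsq{p}\to\dual{\Qadicsq{p}}\cong\solenoid_p\times\solenoid_p$, from which $\Xi_p\cong\solenoid_p$ emerges directly. If you invoke Kleppner at your crucial step you are in effect switching to that route, and the direct-limit scaffolding only serves to recognize the answer; if instead you supply the ${\varprojlim}^1$-vanishing argument, your proof becomes a legitimately different and self-contained alternative, resting on the classical computation of $H^2(\Z^2,\T)$ rather than on the bicharacter machinery.
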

For any $p \in \N, p > 1$, the groups $\Xi_p$ and $\solenoid_p$ are obviously isomorphic as topological groups; yet it is easier to perform our computations in the additive group $\Xi_p$ in what follows. Thus, as a topological group, $H^2\left(\Qadicsq{p},\T\right)$ is isomorphic to $\solenoid_p$. Moreover, we observe that a corollary of Theorem (\ref{multipliers}) is that $\Psi_\alpha$ and $\Psi_\beta$ are cohomologous if and only if $\alpha = \beta \in \Xi_p$.  The proof of Theorem (\ref{multipliers}) involves the standard calculations for cohomology classes of multipliers on discrete Abelian groups, due to A. Kleppner, generalizing results of Backhouse and Bradley.

With this understanding of the multipliers of $\Qadicsq{p}$, we thus propose to classify the noncommutative solenoids $C^\ast\left(\Qadicsq{p},\sigma\right)$.  Let us start by recalling \cite{Zeller-Meier68} that for any multiplier $\sigma$ of a discrete group $\Gamma$, the C*-algebra $C^{\ast}\left(\Gamma,\sigma\right)$ is the $C^{\ast}$-completion of the involutive Banach algebra $\left(\ell^1\left(\Gamma\right),\ast_\sigma,\cdot^\ast\right)$, where the twisted convolution $\ast_\sigma$ is given for any $f_1,f_2\in\ell^1\left(\Gamma\right)$ by 
\begin{equation*}
f_1 \ast_\sigma f_2: \gamma\in\Gamma\longmapsto\sum_{\gamma_1\in \Gamma}f_1(\gamma_1)f_2(\gamma-\gamma_1)\sigma(\gamma_1,\gamma-\gamma_1)\text{,}
\end{equation*}
while the adjoint operation is given by:
\begin{equation*}
f_1^{\ast}:\gamma\in\Gamma \longmapsto \overline{\sigma(\gamma,-\gamma)f_1(-\gamma)}\text{.}
\end{equation*}
The C*-algebra $C^\ast\left(\Gamma,\sigma\right)$ is then shown to be the universal C*-algebra generated by a family $(W_\gamma)_{\gamma\in\Gamma}$ of unitaries such that $W_\gamma W_\delta = \sigma(\gamma,\delta)W_{\gamma\delta}$ for any $\gamma,\delta\in \Gamma$ \cite{Zeller-Meier68}. We shall henceforth refer to these generating unitaries as the \emph{canonical unitaries} of $C^\ast\left(\Gamma,\sigma\right)$.

One checks easily that if $\sigma$ and $\eta$ are two cohomologous multipliers of the discrete group $\Gamma$, then $C^\ast\left(\Gamma,\sigma\right)$ and $C^\ast\left(\Gamma,\eta\right)$ are *-isomorphic \cite{Zeller-Meier68}. Thus, by Theorem (\ref{multipliers}), we shall henceforth restrict our attention to multipliers of $\Qadicsq{p}$ of the form $\Psi_\alpha$ with $\alpha\in\Xi_p$. With this in mind, we introduce the following notation:
\begin{notation}\label{solenoid-not}
For any $p\in\N, p > 1$ and for any $\alpha \in \Xi_p$, the C*-algebra $C^\ast\left(\Qadicsq{p},\Psi_\alpha\right)$, with $\Psi_\alpha$ defined in Theorem (\ref{multipliers}), is denoted by $\algebra_\alpha$.
\end{notation}

Noncommutative solenoids, defined in Definition (\ref{ncsolenoid-def}) as twisted group algebras of $\Qadicsq{p}$, also have a presentation as transformation group $C^{\ast}$-algebras, in a manner similar to the situation with rotation C*-algebras:

\begin{proposition}\cite[Proposition 3.3]{Latremoliere11c}\label{crossed-product}
Let $p\in\N,p>1$ and $\alpha\in\Xi_p$. Let $\theta^\alpha$ be the action of $\Qadic{p}$ on $\solenoid_p$ defined for all $\frac{q}{p^k} \in \Qadic{p}$ and for all $(z_n)_{n\in\N} \in \solenoid_p$ by:
\[
\theta^\alpha_{\frac{q}{p^k}} \left((z_n)_{n\in\N} \right) = \left(e^{\left(2i\pi\alpha_{(k+n)} q\right)} z_n \right)_{n\in\N}\text{.}
\]
The C*-crossed-product $C(\solenoid_p)\rtimes_{\theta^\alpha} \Qadic{p}$ is *-isomorphic to $\algebra_\alpha$.
\end{proposition}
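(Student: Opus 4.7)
The plan is to use Pontryagin duality to identify $C(\solenoid_p)$ with $C^\ast(\Qadic{p})$, then exploit the fact that the multiplier $\Psi_\alpha$ is trivial on each of the two factors $\Qadic{p}\times\{0\}$ and $\{0\}\times\Qadic{p}$ of $\Qadicsq{p}$, so that $\algebra_\alpha$ is generated by two commuting-up-to-a-unitary copies of $C^\ast(\Qadic{p})$.

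First, I would exploit Proposition \ref{solenoids}: Pontryagin duality and the Fourier transform give $C(\solenoid_p)\cong C^\ast(\Qadic{p})$, sending a canonical unitary $U_{q_1/p^{k_1}}\in C^\ast(\Qadic{p})$ to the character $(z_n)_{n\in\N}\mapsto z_{k_1}^{q_1}$. Using the formula for $\theta^\alpha$ in the statement, a direct computation shows that the induced action $\widetilde{\theta}^\alpha$ on $C^\ast(\Qadic{p})$ acts diagonally on canonical unitaries, namely $\widetilde{\theta}^\alpha_{q/p^k}(U_{q_1/p^{k_1}}) = e^{-2i\pi\alpha_{k+k_1}qq_1}U_{q_1/p^{k_1}}$. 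Consequently, the crossed product $C(\solenoid_p)\rtimes_{\theta^\alpha}\Qadic{p}$ is the universal C*-algebra generated by two unitary representations $(U_g)_{g\in\Qadic{p}}$ and $(V_h)_{h\in\Qadic{p}}$ of $\Qadic{p}$ satisfying $V_hU_gV_h^\ast = e^{-2i\pi\alpha_{k+k_1}qq_1}U_g$ when $g=q_1/p^{k_1}$ and $h=q/p^k$.

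Next, I would look at $\algebra_\alpha$ through the lens of its canonical unitaries $(W_{(g,h)})_{(g,h)\in\Qadicsq{p}}$. Because $\Psi_\alpha((g_1,0),(g_2,0))$ and $\Psi_\alpha((0,h_1),(0,h_2))$ both equal $1$ by inspection of the formula in Theorem \ref{multipliers}, the families $U_g := W_{(g,0)}$ and $V_h := W_{(0,h)}$ are genuine unitary representations of $\Qadic{p}$ in $\algebra_\alpha$. A direct calculation using the cocycle identity yields $W_{(0,h)}W_{(g,0)}W_{(0,h)}^\ast = \overline{\Psi_\alpha((g,0),(0,h))}\,W_{(g,0)} = e^{-2i\pi\alpha_{k+k_1}qq_1}U_g$ with the same conventions. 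Moreover, every $W_{(g,h)}$ equals $V_hU_g$ up to a scalar, so $\algebra_\alpha$ is generated by the $U_g$ and $V_h$.

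Having matched generators and relations, I would conclude via a standard universal property argument: the universal property of the crossed product produces a *-homomorphism $C(\solenoid_p)\rtimes_{\theta^\alpha}\Qadic{p}\to \algebra_\alpha$, while the universal property of $C^\ast(\Qadicsq{p},\Psi_\alpha)$ (applied to $(g,h)\mapsto V_hU_g$ inside the crossed product, which one checks satisfies the $\Psi_\alpha$-twisted product rule) gives a *-homomorphism in the opposite direction; by tracing generators, the two maps are mutual inverses. The main obstacle is bookkeeping: one must carefully verify that the cocycle identity for $\Psi_\alpha$ is equivalent to the crossed-product commutation relation with the sign conventions produced by Fourier transforming $\theta^\alpha$, and that $(g,h)\mapsto V_hU_g$ really does satisfy $\Psi_\alpha$ rather than its inverse or opposite.
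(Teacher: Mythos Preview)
The paper under review is a survey and does not include a proof of this proposition; it simply cites \cite[Proposition~3.3]{Latremoliere11c}. So there is no in-paper proof to compare against.

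That said, your approach is correct and is precisely the standard argument one would give. The key structural observation --- that $\Psi_\alpha$ restricts trivially to each factor $\Qadic{p}\times\{0\}$ and $\{0\}\times\Qadic{p}$, so that $\algebra_\alpha$ is generated by two honest unitary representations of $\Qadic{p}$ satisfying a covariance relation --- is exactly what reduces the twisted group algebra to a crossed product of $C^\ast(\Qadic{p})\cong C(\solenoid_p)$ by $\Qadic{p}$. Your computation $W_{(0,h)}W_{(g,0)}=W_{(g,h)}$ and $W_{(g,0)}W_{(0,h)}=\Psi_\alpha((g,0),(0,h))W_{(g,h)}$ is right, and the resulting covariance relation matches the action $\theta^\alpha$ after Fourier transform. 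The two universal-property maps you describe are indeed mutual inverses on generators. The sign/convention bookkeeping you flag as the main obstacle is genuine but routine: the only place to be careful is whether the induced action on $C(\solenoid_p)$ is by $\theta^\alpha$ or its inverse, which is a matter of whether one pulls back functions by $\theta^\alpha_h$ or $\theta^\alpha_{-h}$; either convention yields $\ast$-isomorphic crossed products, so the statement holds regardless.
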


Whichever way one decides to study them, there are longstanding methods in place to determine whether or not these $C^{\ast}$ algebras are simple (see for instance \cite{}).  For now, we concentrate on methods from the theory of twisted group $C^{\ast}$-algebras. 

\begin{theorem-definition}\cite[Theorem 1.5]{Packer92}\label{simplicity-thm}
The \emph{symmetrizer group} of a multiplier $\sigma: \Gamma\times\Gamma\rightarrow\T$ of a discrete group $\Gamma$ is given by
\begin{equation*}
S_{\sigma}=\left\{\gamma\in \Gamma: \forall g \in \Gamma \quad \sigma(\gamma, g)\sigma(g,\gamma)^{-1}=1\right\}\text{.}
\end{equation*}
The C*-algebra $C^{\ast}(\Gamma,\sigma)$ is simple if, and only if the symmetrizer group $S_\sigma$ is reduced to the identity of $\Gamma$.
\end{theorem-definition}

In \cite{Latremoliere11c}, we thus characterize when the symmetrizer group of the multipliers of $\Qadicsq{p}$ given by Theorem (\ref{multipliers}) is non-trivial:

\begin{theorem}\cite[Theorem 2.12]{Latremoliere11c}\label{symmetrizer-computation-thm}
Let $p\in\N,p>1$. Let $\alpha \in \Xi_p$. Denote by $\Psi_\alpha$ the multiplier defined in Theorem (\ref{multipliers}). The following are equivalent:
\begin{enumerate}
  \item the symmetrizer group $S_{\Psi_\alpha}$ is non-trivial,
  \item the sequence $\alpha$ has finite range, i.e. the set $\{\alpha_j : j \in \N\}$ is finite,
  \item there exists $k\in \N$ such that $(p^k-1)\alpha_0\in \Z$, 
  \item the sequence $\alpha$ is periodic,
  \item there exists a positive integer $b\in\N$ such that:
\begin{equation*}
S_{\Psi_\alpha}= b\Qadicsq{p}=\left\{ (br_1,br_2),\;(r_1,r_2)\in \Qadicsq{p}\right\}\text{.}
\end{equation*}
\end{enumerate}
\end{theorem}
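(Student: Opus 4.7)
The plan is to first reduce the definition of $S_{\Psi_\alpha}$ to an explicit divisibility condition on the sequence $\alpha$, and then establish the five equivalences via the cycle $(1)\Rightarrow(2)\Rightarrow(4)\Rightarrow(3)\Rightarrow(5)\Rightarrow(1)$. For the initial reduction, writing $\gamma=(q_1/p^{k_1},q_2/p^{k_2})$ and $g=(q_3/p^{k_3},q_4/p^{k_4})$ and using the explicit formula for $\Psi_\alpha$ from Theorem~\ref{multipliers}, I would compute
\[
\Psi_\alpha(\gamma,g)\,\overline{\Psi_\alpha(g,\gamma)}=\exp\bigl(2i\pi(\alpha_{k_1+k_4}q_1q_4-\alpha_{k_2+k_3}q_2q_3)\bigr)\text{,}
\]
force this to equal $1$ for every $g$, and then specialize first to $q_3=0$ and then to $q_4=0$ to extract the key concrete reformulation: $\gamma\in S_{\Psi_\alpha}$ if and only if $q_1\alpha_n\in\Z$ for every $n\geq k_1$ and $q_2\alpha_n\in\Z$ for every $n\geq k_2$.

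For the easy portions of the cycle, I would argue as follows. For $(1)\Rightarrow(2)$, a non-trivial $\gamma\in S_{\Psi_\alpha}$ has some $q_i\neq 0$, forcing the tail of $(\alpha_n)$ into the finite set $\tfrac{1}{|q_i|}\Z\cap[0,1)$, so the full range is finite. For $(2)\Rightarrow(4)$, finiteness of the range forces a repetition $(\alpha_{n_1},\alpha_{n_1+1})=(\alpha_{n_2},\alpha_{n_2+1})$ for some $n_1<n_2$; the forward recursion propagates this to $\alpha_{n_1+j}=\alpha_{n_2+j}$ for all $j\geq 0$, while the backward determinism $\alpha_m=p\alpha_{m+1}-\lfloor p\alpha_{m+1}\rfloor$ transports the equality back to $n=0$, so $\alpha$ is periodic of period $T=n_2-n_1$. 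For $(4)\Rightarrow(3)$, iterating $p\alpha_{n+1}=\alpha_n+k_n$ from $n=0$ to $n=T-1$ and using $\alpha_T=\alpha_0$ gives $p^T\alpha_0=\alpha_0+M$ for an integer $M$, yielding $(p^T-1)\alpha_0\in\Z$.

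The crux is closing the cycle with $(3)\Rightarrow(5)$: once $(p^k-1)\alpha_0\in\Z$ is upgraded to $(p^k-1)\alpha_n\in\Z$ for every $n$, setting $b=p^k-1$ (or, more efficiently, its minimal value) gives $b\alpha_n\in\Z$ for all $n$, and the characterization from the first step yields $b\Qadicsq{p}\subseteq S_{\Psi_\alpha}$; refining $b$ to the exact common denominator of the $\alpha_n$ produces the reverse inclusion, hence the precise identification in (5), after which $(5)\Rightarrow(1)$ is immediate since $(b,0)\in b\Qadicsq{p}$ is nonzero. The main obstacle is precisely this propagation step: the hypothesis controls only $\alpha_0$ directly, so one must exploit both the recursion $p\alpha_{n+1}=\alpha_n+k_n$ and the identification $\Xi_p\cong\solenoid_p$---specifically, the fact that the torsion subgroup of the $p$-solenoid consists exactly of elements whose order is coprime to $p$, and any such order divides some $p^k-1$---to propagate the torsion property of $\alpha_0$ through the entire sequence.
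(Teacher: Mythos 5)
Your reduction of membership in $S_{\Psi_\alpha}$ to the condition ``$q_1\alpha_n\in\Z$ for all $n\geq k_1$ and $q_2\alpha_n\in\Z$ for all $n\geq k_2$'' is correct, and so are the steps $(1)\Rightarrow(2)$, $(4)\Rightarrow(3)$ and $(5)\Rightarrow(1)$. But the step you yourself single out as the crux, $(3)\Rightarrow(5)$, cannot be repaired along the lines you propose. The propagation of $(p^k-1)\alpha_0\in\Z$ to $(p^k-1)\alpha_n\in\Z$ for all $n$ is simply false: take $p=2$ and $\alpha=(0,\tfrac12,\tfrac14,\tfrac18,\dots)\in\Xi_2$, or $\alpha=(\tfrac13,\tfrac16,\tfrac1{12},\dots)$. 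In both cases $(2^k-1)\alpha_0\in\Z$ for a suitable $k$, yet by your own characterization $S_{\Psi_\alpha}$ is trivial and the range of $\alpha$ is infinite. The reason is that the recursion $p\alpha_{n+1}=\alpha_n+k_n$ is not forward-deterministic: each $\alpha_n$ has $p$ admissible successors, so knowing that $e^{2i\pi\alpha_0}$ is a torsion point of $\T$ of order coprime to $p$ says nothing about the tail of the sequence; equivalently, the coordinate projection $\solenoid_p\to\T$ sends many non-torsion elements of the solenoid to torsion points of the circle, so the torsion-subgroup fact you invoke does not apply to $\alpha$ itself. Condition (3) as transcribed in this survey (a condition on $\alpha_0$ alone) is strictly weaker than (1), (2), (4), (5); the equivalence only holds if (3) is read as ``there is $k\geq1$ with $(p^k-1)\alpha_j\in\Z$ for \emph{all} $j$'' (equivalently, the corresponding element of $\solenoid_p$ is torsion), and a correct proof must obtain (5) from (2) or (4) directly, not from the condition on $\alpha_0$.

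A smaller, fixable gap: in $(2)\Rightarrow(4)$ you claim that a repeated pair $(\alpha_{n_1},\alpha_{n_1+1})=(\alpha_{n_2},\alpha_{n_2+1})$ propagates forward, but again forward successors are not unique, so this does not follow as stated. The standard repair is the one that also rescues the rest of the argument: a repetition $\alpha_{n_1}=\alpha_{n_2}$ gives $(p^{n_2-n_1}-1)\alpha_{n_2}\in\Z$, hence every $\alpha_n$ is rational with a denominator coprime to $p$, and this denominator is the same for all $n$; then $\alpha_{n+1}$ is the unique solution of $p\alpha_{n+1}\equiv\alpha_n \bmod 1$ with that denominator (multiplication by $p$ is invertible modulo it), so the recursion becomes deterministic in both directions and periodicity follows, after which your identification of $S_{\Psi_\alpha}$ with $b\Qadicsq{p}$, $b$ the common denominator, goes through.
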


Theorem (\ref{simplicity-thm}), when applied to noncommutative solenoids via Theorem (\ref{symmetrizer-computation-thm}), allows us to conclude:

\begin{theorem}\cite[Theorem 3.5]{Latremoliere11c}\label{simplicity-thm}
Let $p\in\N,p>1$ and $\alpha \in \Xi_p$. Then the following are equivalent:
\begin{enumerate}
  \item the noncommutative solenoid $\algebra_\alpha$ is simple,
  \item the set $\{\alpha_j : j \in \N \}$ is infinite,
  \item for every $k\in\N$, we have $(p^k-1)\alpha_0\not\in \Z$. 
\end{enumerate}
\end{theorem}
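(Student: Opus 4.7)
The plan is to derive this theorem as a direct consequence of the two results cited immediately before it: Theorem-Definition~\ref{simplicity-thm} characterizing simplicity of twisted group C*-algebras in terms of the symmetrizer group, and Theorem~\ref{symmetrizer-computation-thm} characterizing triviality of the symmetrizer group $S_{\Psi_\alpha}$ in terms of the sequence $\alpha$. The strategy is essentially to chain these equivalences and negate.

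First, I would apply Theorem-Definition~\ref{simplicity-thm} to $\Gamma = \Qadicsq{p}$ and $\sigma = \Psi_\alpha$. Since the identity of the additive group $\Qadicsq{p}$ is $(0,0)$, simplicity of $\algebra_\alpha = C^\ast(\Qadicsq{p}, \Psi_\alpha)$ is equivalent to $S_{\Psi_\alpha} = \{(0,0)\}$, i.e., to $S_{\Psi_\alpha}$ being trivial. This reduces (1) to the assertion that the symmetrizer group is trivial.

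Next, I would invoke Theorem~\ref{symmetrizer-computation-thm}, which gives five equivalent characterizations of $S_{\Psi_\alpha}$ being \emph{non-trivial}. Negating the equivalence of (1), (2), and (3) there, the symmetrizer group $S_{\Psi_\alpha}$ is trivial if and only if $\{\alpha_j : j \in \N\}$ is infinite, if and only if for every $k \in \N$ we have $(p^k - 1)\alpha_0 \notin \Z$. Combining this chain of equivalences with the reduction from the preceding paragraph yields precisely the equivalence of (1), (2), and (3) in the statement.

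Since the proof is a pure combination of previously established results, there is no real technical obstacle; the only point requiring a touch of care is making sure that the negation of the third clause in Theorem~\ref{symmetrizer-computation-thm} is stated with the correct quantifier (``for every $k \in \N$'' rather than ``there exists $k$''), which is precisely what condition (3) of the present theorem records.
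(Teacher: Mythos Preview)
Your proposal is correct and is exactly the approach the paper takes: it explicitly states that the Theorem-Definition on simplicity via the symmetrizer group, ``when applied to noncommutative solenoids via Theorem~(\ref{symmetrizer-computation-thm}), allows us to conclude'' the present theorem. Your attention to the quantifier when negating condition~(3) of Theorem~(\ref{symmetrizer-computation-thm}) is appropriate and completes the argument.
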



In particular, if $\alpha\in\Xi_p$ is chosen with at least one irrational entry, then by definition of $\Xi_p$, all entries of $\alpha$ are irrational, and by Theorem (\ref{simplicity-thm}), the noncommutative solenoid $\algebra_\alpha$ is simple. The reader may observe that, even if $\alpha \in \Xi_p$ only has rational entries, the noncommutative solenoid may yet be simple --- as long as $\alpha$ has infinite range. We called this situation the {\it aperiodic rational case} in \cite{Latremoliere11c}. 

\begin{example}[Aperiodic rational case]
Let $p=7$, and consider $\alpha\in \Xi_7$ given by 
\begin{equation*}
\alpha = \left(\frac{2}{7}, \frac{2}{49}, \frac{2}{343}, \frac{2}{2401},\cdots\right) = \left( \frac{2}{7^n}\right)_{n\in\N}\text{.}
\end{equation*}
Note that $\alpha_j\in\mathbb Q$ for all $j\in\N$, yet Theorem (\ref{simplicity-thm}) tells us that the noncommutative solenoid $\algebra_\alpha$ is simple!
\end{example}

The following is an example where the symmetrizer subgroup is non-trivial, so that the corresponding $C^{\ast}$-algebra is not simple.

\begin{example}[Periodic rational case]
Let $p=5$, and consider $\alpha\in \Xi_5$ given by 
\begin{equation*}
\alpha = \left(\frac{1}{62}, \frac{25}{62}, \frac{5}{62}, \frac{1}{62},\cdots \right)\text{.}
\end{equation*}

Theorem (\ref{symmetrizer-computation-thm}) shows that the symmetrizer group of the multiplier $\Psi_\alpha$ of $\left(\Qadic{5}\right)^2$ given by Theorem (\ref{multipliers}) is:
\begin{equation*}
S_{\alpha}=\left\{\left(\frac{62 j_1}{5^k}, \frac{62 j_2}{5^k}\right) \in \Q:j_1, j_2 \in \Z, k\in \N\right\}\text{.}
\end{equation*}
Hence the noncommutative solenoid $\algebra_\alpha$ is not simple by Theorem (\ref{simplicity-thm}).
\end{example}

We conclude this section with the following result about the existence of traces on noncommutative solenoids, which follows from \cite{Hoegh-Krohn81}, since the Pontryagin dual $\solenoid_p\times\solenoid_p$ of $\Qadicsq{p}$ acts ergodically on $\algebra_\alpha$ for any $\alpha\in\Xi_p$ via the dual action:

\begin{theorem}\cite[Theorem 3.8]{Latremoliere11c}
Let $p\in\N,p>1$ and $\alpha\in\Xi_p$. There exists at least one tracial state on the noncommutative solenoid $\algebra_\alpha$. Moreover, this tracial state is unique if, and only if $\alpha$ is not periodic.
\end{theorem}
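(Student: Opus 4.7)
The plan is to deduce the theorem from \cite{Hoegh-Krohn81}, which asserts that a compact group acting ergodically on a unital C*-algebra admits a unique invariant state, which is automatically a trace. For existence I will simply exhibit such an action; for the two directions of the uniqueness statement I will combine this with the symmetrizer analysis of Theorem~\ref{symmetrizer-computation-thm}.

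\textbf{Existence.} The dual action of the compact group $G := \solenoid_p \times \solenoid_p$ (the Pontryagin dual of $\Qadicsq{p}$) on $\algebra_\alpha$ is defined on canonical unitaries by $\chi\cdot W_\gamma = \langle \chi,\gamma\rangle W_\gamma$. I would verify ergodicity through the standard spectral-subspace argument: the spectral subspace at $\gamma \in \Qadicsq{p}$ is the one-dimensional line $\C W_\gamma$, these span a dense subspace of $\algebra_\alpha$, and the fixed-point subspace corresponds to $\gamma = 0$, hence equals $\C\cdot 1$. Applying \cite{Hoegh-Krohn81} then yields a $G$-invariant tracial state on $\algebra_\alpha$.

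\textbf{Uniqueness when $\alpha$ is non-periodic.} Let $\tau$ be any tracial state on $\algebra_\alpha$. For $\gamma,\delta \in \Qadicsq{p}$, the commutation relation $W_\delta W_\gamma W_\delta^* = \Psi_\alpha(\delta,\gamma)\overline{\Psi_\alpha(\gamma,\delta)}\,W_\gamma$ combined with the trace property yields
\begin{equation*}
\tau(W_\gamma)\bigl(1 - \Psi_\alpha(\delta,\gamma)\overline{\Psi_\alpha(\gamma,\delta)}\bigr) = 0.
\end{equation*}
For $\gamma \notin S_{\Psi_\alpha}$, some $\delta$ realizes a nontrivial factor, forcing $\tau(W_\gamma) = 0$. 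Under the assumption that $\alpha$ is non-periodic, Theorem~\ref{symmetrizer-computation-thm} gives $S_{\Psi_\alpha} = \{0\}$, so $\tau$ vanishes on every $W_\gamma$ with $\gamma \neq 0$ and is thereby determined on the dense span of the canonical unitaries by $\tau(W_0) = 1$.

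\textbf{Non-uniqueness when $\alpha$ is periodic.} Here Theorem~\ref{symmetrizer-computation-thm} supplies a nontrivial subgroup $S_{\Psi_\alpha} = b\Qadicsq{p}$ on which $\Psi_\alpha$ is symmetric, so the $W_\gamma$ with $\gamma \in S_{\Psi_\alpha}$ lie in the center of $\algebra_\alpha$ and span a commutative C*-subalgebra $Z$. Averaging over the closed subgroup of $G$ that annihilates $S_{\Psi_\alpha}$ produces a conditional expectation $E:\algebra_\alpha \to Z$, and for each character $\phi$ of $Z$ the composite $\phi\circ E$ is a state on $\algebra_\alpha$; distinct characters give distinct states because they already differ on $Z$. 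The main obstacle is verifying that each $\phi \circ E$ is genuinely a trace. This reduces, using the fact that the antisymmetrization $(\gamma,\delta)\mapsto \Psi_\alpha(\gamma,\delta)\overline{\Psi_\alpha(\delta,\gamma)}$ is a bicharacter whose left kernel is exactly $S_{\Psi_\alpha}$, to showing that $\gamma + \delta \in S_{\Psi_\alpha}$ implies $\Psi_\alpha(\gamma,\delta) = \Psi_\alpha(\delta,\gamma)$, a short bilinearity computation that I would carry out directly on the explicit formula for $\Psi_\alpha$ given in Theorem~\ref{multipliers}.
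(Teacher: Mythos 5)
Your proposal is correct. The existence step is exactly the paper's own argument: the survey derives existence from \cite{Hoegh-Krohn81} via ergodicity of the dual action of $\solenoid_p\times\solenoid_p$ on $\algebra_\alpha$, and that is all the proof it actually displays, the uniqueness criterion being cited from \cite{Latremoliere11c} without proof. Your completion of the uniqueness statement is a legitimate and self-contained route that uses only tools the paper makes available: the standard computation $\tau(W_\gamma)\bigl(1-\Psi_\alpha(\delta,\gamma)\overline{\Psi_\alpha(\gamma,\delta)}\bigr)=0$ together with the triviality of $S_{\Psi_\alpha}$ in the non-periodic case (Theorem \ref{symmetrizer-computation-thm}) pins every trace down to the canonical one, and in the periodic case the central subalgebra $Z$ generated by $\{W_\gamma:\gamma\in S_{\Psi_\alpha}\}$ plus the conditional expectation obtained by averaging over $S_{\Psi_\alpha}^{\perp}$ yields a family of distinct traces $\phi\circ E$. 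One simplification: the step you flag as the main obstacle is immediate and needs no computation with the explicit formula for $\Psi_\alpha$. The antisymmetrization $\rho(\gamma,\delta)=\Psi_\alpha(\gamma,\delta)\overline{\Psi_\alpha(\delta,\gamma)}$ is a bicharacter with $\rho(\delta,\delta)=1$, so if $\gamma+\delta\in S_{\Psi_\alpha}$ then $\rho(\gamma,\delta)=\rho(\gamma,\delta)\rho(\delta,\delta)=\rho(\gamma+\delta,\delta)=1$, which is exactly the required symmetry; distinctness of the states then follows since $E$ restricts to the identity on $Z$ and $Z\cong C(\dual{S_{\Psi_\alpha}})$ has more than one character. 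For comparison, the original reference can also obtain non-uniqueness from the structural description of periodic-case noncommutative solenoids as bundles of matrix algebras over $\solenoid_p^2$, but your symmetrizer-based argument is equally valid and arguably more elementary.
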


Moreover, since noncommutative solenoids carry an ergodic action of the compact groups $\solenoid_p$, if one chooses any continuous length function on $\solenoid_p$, then one may employ the results found in \cite{Rieffel98a} to equip noncommutative solenoids with quantum compact metric spaces structures and, for instance, use \cite{Rieffel00} and \cite{Latremoliere05} to obtain various results on continuity for the quantum Gromov-Hausdorff distance of the family of noncommutative solenoids as the multiplier and the length functions are left to vary. In this paper, we shall focus our attention on the noncommutative topology of our noncommutative solenoids, rather than their metric properties.

In \cite[Theorem 3.17]{Latremoliere11c}, we provide a full description of noncommutative solenoids as bundles of matrix algebras over the space $\solenoid_p^2$, while in contrast, in \cite[Proposition 3.16]{Latremoliere11c}, we note that for $\alpha$ with at least (and thus all) irrational entry, the noncommutative solenoid $\algebra_\alpha$ is an inductive limit of circle algebras (i.e. AT), with real rank zero. Both these results follow from writing noncommutative solenoids as inductive limits of quantum tori, which is the starting point for our next section.

\section{Classification of the noncommutative Solenoids} 

Noncommutative solenoids are classified by their $K$-theory; more precisely by their $K_0$ groups and the range of the traces on $K_0$. The main content of our paper \cite{Latremoliere11c} is the computation of the $K$-theory of noncommutative solenoids and its application to their classification up to *-isomorphism.

The starting point of this computation is the identification of noncommutative solenoids as inductive limits of sequences of noncommutative tori. A \emph{noncommutative torus} is a twisted group C*-algebra for $\Z^d$, with $d\in \N,d>1$ \cite{Rieffel81}. In particular, for $d=2$, we have the following description of noncommutative tori. Any multiplier of $\Z^2$ is cohomologous to one of the form:
\begin{equation*}
\sigma_\theta : \left(\begin{pmatrix}z_1\\z_2\end{pmatrix},\begin{pmatrix}y_1\\y_2\end{pmatrix}\right)\longmapsto \exp(2i\pi\theta z_1y_2)
\end{equation*}
for some $\theta\in [0,1)$. Consequently, for a given $\theta \in [0,1)$, the C*-algebra $C^\ast\left(\Z^2,\sigma_\theta\right)$ is the universal C*-algebra generated by two unitaries $U,V$ such that:
\begin{equation*}
UV = e^{2i\pi\theta}VU\text{.}
\end{equation*}
We will employ the following notation throughout this paper:
\begin{notation}\label{qtorus-not}
The noncommutative torus $C^\ast\left(\Z^2,\sigma_\theta\right)$, for $\theta\in[0,1)$, is denoted by $A_\theta$. Moreover, the two canonical generators of $A_\theta$ (i.e. the unitaries corresponding to $(1,0),(0,1) \in \Z^2$), are denoted by $U_\theta$ and $V_\theta$, so that $U_\theta V_\theta = e^{2i\pi\theta}V_\theta U_\theta$.
\end{notation}

For any $\theta\in[0,1)$, the noncommutative torus $A_\theta$ is *-isomorphic to the crossed-product C*-algebra for the action of $\Z$ on the circle $\T$ generated by the rotation of angle $2i\pi\theta$, and thus $A_\theta$ is also known as the rotation algebra for the rotation of angle $\theta$ --- a name by which it was originally known.

The following question naturally arises: since $A_{\theta}$ is a twisted $\Z^2$ algebra, and $\Qadicsq{p}$ can be realized as a direct limit group built from embeddings of $\Z^2$ into itself, is it possible  to  build our noncommutative solenoids $\algebra_\alpha$ as a direct limits of rotation algebras? The answer is positive, and this observation provides much structural information regarding noncommutative solenoids.

\begin{theorem}\cite[Theorem 3.7]{Latremoliere11c}\label{inductive-limit-thm}
Let $p\in\N,p>1$ and $\alpha\in\Xi_p$. For all $n\in\N$, let $\varphi_n$ be the unique *-morphism from $A_{\alpha_{2n}}$ into $A_{\alpha_{2n+2}}$ given by:
\begin{equation*}
\begin{cases}
U_{\alpha_{2n}} \longmapsto U_{\alpha_{2n+2}}^p\\
V_{\alpha_{2n}} \longmapsto V_{\alpha_{2n+2}}^p\\
\end{cases}
\end{equation*}
Then:
\[
A_{\alpha_0}\; \stackrel{\varphi_0}{\longrightarrow}\; A_{\alpha_2}\;\stackrel{\varphi_1}{\longrightarrow}\;A_{\alpha_4} \;\stackrel{\varphi_2}{\longrightarrow}\; \cdots
\]
converges to the noncommutative solenoid $\algebra_\alpha$. Moreover, if $(W_{r_1,r_2})_{ (r_1,r_2)\in\Qadicsq{p}}$ is the family of canonical unitary generators of $\algebra_\alpha$, then, for all $n\in\N$, the rotation algebra $A_{\alpha_{2n}}$ embeds in $\algebra_\alpha$ via the unique extension of the map:
\begin{equation*}
\begin{cases}
U_{\alpha_{2n}} \longmapsto W_{\left(\frac{1}{p^n},0\right)}\\
V_{\alpha_{2n}} \longmapsto W_{\left(0,\frac{1}{p^n}\right)} \text{.}
\end{cases}
\end{equation*}
to a *-morphism, given by the universal property of rotation algebras; one checks that this embeddings, indeed, commute with the maps $\varphi_n$. 
\end{theorem}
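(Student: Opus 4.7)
The plan is to prove the theorem in three stages: existence of the connecting $\ast$-morphisms $\varphi_n$, existence of compatible embeddings $\iota_n : A_{\alpha_{2n}} \to \algebra_\alpha$, and identification of $\algebra_\alpha$ with the inductive limit via density plus injectivity.

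First I would verify well-definedness of $\varphi_n$. The defining relation of $\Xi_p$ gives $p\alpha_{m+1} \equiv \alpha_m \pmod 1$ for each $m$; applying it twice yields $p^{2}\alpha_{2n+2} \equiv \alpha_{2n} \pmod 1$. Starting from $U_{\alpha_{2n+2}}V_{\alpha_{2n+2}} = e^{2i\pi\alpha_{2n+2}}V_{\alpha_{2n+2}}U_{\alpha_{2n+2}}$, a short induction on $p$ gives $U_{\alpha_{2n+2}}^{p}V_{\alpha_{2n+2}}^{p} = e^{2i\pi p^{2}\alpha_{2n+2}}V_{\alpha_{2n+2}}^{p}U_{\alpha_{2n+2}}^{p} = e^{2i\pi\alpha_{2n}}V_{\alpha_{2n+2}}^{p}U_{\alpha_{2n+2}}^{p}$, so the universal property of $A_{\alpha_{2n}}$ furnishes $\varphi_n$.

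Next, using the explicit form of $\Psi_\alpha$ from Theorem (\ref{multipliers}), I would compute $W_{(1/p^n,0)}W_{(0,1/p^n)} = e^{2i\pi\alpha_{2n}}W_{(1/p^n,\,1/p^n)}$ (here $q_1 = q_4 = 1$ and $k_1 + k_4 = 2n$) while $W_{(0,1/p^n)}W_{(1/p^n,0)} = W_{(1/p^n,\,1/p^n)}$ (here $q_1 = q_4 = 0$); the rotation relation therefore holds, producing $\iota_n$ by universality of $A_{\alpha_{2n}}$. Compatibility $\iota_{n+1}\circ\varphi_n = \iota_n$ then reduces to showing $W_{(1/p^{n+1},0)}^{p} = W_{(1/p^n,0)}$ and the analogous identity for the $V$ generator; both follow by induction, since in each relevant product one of the $q_i$ vanishes so $\Psi_\alpha$ is trivial, and the group law in $\Qadicsq{p}$ then handles the exponents.

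Finally, I would identify $\algebra_\alpha$ with the inductive limit. Density is routine: every element of $\Qadicsq{p}$ admits a common denominator $p^n$ for both coordinates, so each canonical unitary $W_\gamma$ lies in the image of some $\iota_n$, and $\bigcup_n \iota_n(A_{\alpha_{2n}})$ is a dense $\ast$-subalgebra of $\algebra_\alpha$. The main obstacle is injectivity of each $\iota_n$ (which in turn forces injectivity of each $\varphi_n$): for this I would use that $\Qadicsq{p}$ is abelian, hence amenable, so $\algebra_\alpha$ coincides with its reduced twisted group $C^\ast$-algebra and carries a faithful tracial state $\tau_\alpha$ determined by $\tau_\alpha(W_\gamma) = \delta_{\gamma,0}$. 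Pulling $\tau_\alpha$ back along $\iota_n$ recovers the canonical faithful trace on the rotation algebra $A_{\alpha_{2n}}$, forcing $\iota_n$ to be injective. Combined with density and the commuting squares, the universal property of $C^\ast$-algebraic inductive limits then yields $\algebra_\alpha \cong \varinjlim A_{\alpha_{2n}}$.
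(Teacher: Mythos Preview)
The paper does not actually contain a proof of this theorem: it is a survey result cited from \cite[Theorem 3.7]{Latremoliere11c}, and the statement itself even closes with ``one checks that this embeddings, indeed, commute with the maps $\varphi_n$,'' signalling that the verification is left as routine. So there is no proof in the present paper to compare your argument against.

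That said, your three-stage argument is correct and is exactly the natural one. The computations for $\varphi_n$ and $\iota_n$ are right (the key point being that $\Psi_\alpha$ is trivial whenever $q_1=0$ or $q_4=0$, which makes the powers of $W$ behave multiplicatively in each coordinate separately). The density step is fine once one observes that every $W_{(a/p^n,\,b/p^n)}$ is a scalar multiple of $W_{(1/p^n,0)}^{a}W_{(0,1/p^n)}^{b}$ and hence lies in $\iota_n(A_{\alpha_{2n}})$. Your injectivity argument via the faithful canonical trace is also sound: amenability of $\Qadicsq{p}$ ensures that the full and reduced twisted group $C^\ast$-algebras agree, the vector state at the identity in the regular representation gives a faithful trace $\tau_\alpha$ with $\tau_\alpha(W_\gamma)=\delta_{\gamma,0}$, and the pullback $\tau_\alpha\circ\iota_n$ is readily seen to be the canonical faithful trace on $A_{\alpha_{2n}}$, whence $\iota_n$ is injective. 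The universal property of inductive limits then finishes the identification.
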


Our choice of terminology for noncommutative solenoids is inspired, in part, by Theorem (\ref{inductive-limit-thm}), and the well established terminology of noncommutative torus for rotation algebras. Moreover, as we shall now see, our study of noncommutative solenoids is firmly set within the framework of noncommutative topology.

The main result from our paper \cite{Latremoliere11c} under survey in this section and the previous one is the computation of the $K$-theory of noncommutative solenoid and its application to their classification. An interesting connection between our work on noncommutative solenoid and classifications of Abelian extensions of $\Qadic{p}$ by $\Z$, which in turn are classified by means of the group of $p$-adic integers, emerges as a consequence of our computation. We shall present this result now, starting with some reminders about the $p$-adic integers and Abelian extensions of $\Qadic{p}$, and refer to \cite{Latremoliere11c} for the involved proof leading to it.

\begin{theorem-definition}
Let $p\in\N,p>1$. The set:
\begin{equation*}
\Z_p = \left\{ (J_k)_{k\in\N} : J_0 = 0 \text{ and }\forall k \in \N\quad J_{k+1} \equiv J_k \mod p^k \right\}
\end{equation*}
is a group for the operation defined as:
\begin{equation*}
(J_k)_{k\in\N} + (K_k)_{K\in\N} = ( (J_k + K_k) \mod p^k)_{k\in\N}
\end{equation*}
for any $(J_k)_{k\in\N},(K_k)_{k\in\N} \in \Z_p$. This group is \emph{the group of $p$-adic integers.}
\end{theorem-definition}

One may define the group of $p$-adic integer simply as the set of sequences valued in $\{0,\ldots,p-1\}$ with the appropriate operation, but our choice of definition will make our exposition clearer. We note that we have a natural embedding of $\Z$ as a subgroup of $\Z_p$ by sending $z\in\Z$ to the sequence $(z \mod p^k)_{k\in\N}$. We shall henceforth identify $\Z$ with its image in $\Z_p$ when no confusion may arise.

We can associate, to any $p$-adic integer, a Schur multiplier of $\Qadic{p}$, i.e. a map $\xi_j : \Qadicsq{p} \rightarrow\Z$ which satisfies the (additive) $2$-cocycle identity, in the following manner:

\begin{theorem}\label{schur-thm}\cite{Latremoliere11c}
Let $p\in\N, p >1$ and let $J = (J_k)_{k\in\N} \in \Z_p$. Define the map $\xi_J : \Qadicsq{p} \rightarrow\Z$ by setting, for any $\frac{q_1}{p^{k_1}},\frac{q_2}{p^{k_2}} \in \Qadic{p}$:
\begin{equation*}
\xi_J\left(\frac{q_1}{p^{k_1}},\frac{q_2}{p^{k_2}}\right) = \begin{cases}
-\frac{q_1}{p^{k_1}}\left(J_{k_2}-J_{k_1}\right) &\text{ if $k_2 > k_1$,}\\
-\frac{q_2}{p^{k_2}}\left(J_{k_1}-J_{k_2}\right) &\text{ if $k_1 > k_2$,}\\
\frac{q}{p^r}\left(J_{k_1}-J_r\right) &\text{ if $k_1=k_2$, with $\frac{q}{p^r} = \frac{q_1}{p^{k_1}}+\frac{q_2}{p^{k_2}}$,}
\end{cases}
\end{equation*}
where \emph{all} fractions are written in their \emph{reduced form}, i.e. such that the exponent of $p$ at the denominator is minimal (this form is unique). Then:
\begin{itemize}
\item $\xi_J$ is a Schur multiplier of $\Qadic{p}$ \cite[Lemma 3.11]{Latremoliere11c}.
\item For any $J,K \in \Z_p$, the Schur multipliers $\xi_J$ and $\xi_K$ are cohomologous if, and only if $J-K \in \Z$ \cite[Theorem 3.14]{Latremoliere11c}.
\item Any Schur multiplier of $\Qadic{p}$ is cohomologous to $\xi_J$ for some $J\in\Z_p$ \cite[Theorem 3.16]{Latremoliere11c}.
\end{itemize}
In particular, $\mathrm{Ext}\left(\Qadic{p},\Z\right)$ is isomorphic to $\bigslant{\Z_p}{\Z}$.
\end{theorem}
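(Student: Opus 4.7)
The plan is to realize $\xi_J$ as the $2$-cocycle of a natural extension of $\Qadic{p}$ by $\Z$ built from $J\in\Z_p$. Given a set-theoretic section $s$ of a prospective extension $E_J$ satisfying $s(\tfrac{1}{p^k})=:e_k$ with $p\,e_{k+1}-e_k = J_{k+1}-J_k$, and extending by $s(q/p^k) = q\,e_k$ for fractions in reduced form, the standard formula $\xi_J(a,b) = s(a)+s(b)-s(a+b)$ unwinds to the piecewise expression in the theorem. This viewpoint makes the well-definedness of $\xi_J$ transparent (uniqueness of reduced form) and reduces the $2$-cocycle identity to the fact that $s$ is a valid set-theoretic section; alternatively, one may verify the identity directly by case analysis on the ordering of $k_1, k_2, k_3$, each case telescoping through the increments $J_{k+1}-J_k$.

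For the second bullet, note $\xi_J - \xi_K = \xi_{J-K}$, so it suffices to show $\xi_L$ is a coboundary iff $L\in\Z$. If $L\in\Z$ corresponds to an integer $n$ so that $L_k\equiv n\pmod{p^k}$, define $\beta(q/p^k) := -q(L_k-n)/p^k \in \Z$ and verify $d\beta = \xi_L$ by the same case analysis. Conversely, if $\xi_L = d\beta$, evaluating at $(1, \tfrac{1}{p^k})$ yields $-L_k = \beta(1) + \beta(\tfrac{1}{p^k}) - \beta(\tfrac{p^k+1}{p^k})$; exploiting the relation $p\cdot\tfrac{1}{p^{k+1}} = \tfrac{1}{p^k}$ and iterating the coboundary equations pins down $\beta(\tfrac{1}{p^k})\pmod{p^k}$ up to the fixed integer $\beta(1)$, forcing $L_k \equiv -\beta(1)\pmod{p^k}$ for every $k$, so that $L$ is the image in $\Z_p$ of $-\beta(1)$.

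The hardest bullet is the third (surjectivity). Given a Schur multiplier $\eta$, form the associated Abelian extension $0\to\Z\to E_\eta\to\Qadic{p}\to 0$ and choose any lifts $e_k\in E_\eta$ of $\tfrac{1}{p^k}$, with $e_0$ a chosen lift of $1$. Define $J_k := p^k e_k - e_0$, which lies in $\Z$ because both terms map to $1\in\Qadic{p}$. Then $J_0 = 0$, and $J_{k+1} - J_k = p^k(p\,e_{k+1}-e_k) \in p^k\Z$ since $p\,e_{k+1}-e_k\in\Z$, so $J\in\Z_p$. The section $s'(q/p^k) := q\,e_k$ (reduced form) now yields exactly the cocycle $\xi_J$ by the computation of the first bullet, while $s'$ differs from the canonical section associated with $\eta$ by a $\Z$-valued function on $\Qadic{p}$; hence $\eta$ and $\xi_J$ differ by a coboundary, giving $\eta\sim\xi_J$.

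The final identification $\mathrm{Ext}(\Qadic{p},\Z) \cong \Z_p/\Z$ is then formal: $J\mapsto[\xi_J]$ is a homomorphism $\Z_p \to \mathrm{Ext}(\Qadic{p},\Z)$, surjective by the third bullet and with kernel $\Z$ by the second. The main obstacle throughout is the third bullet; the delicate point is to verify that $s'$ extends coherently from the generators $\tfrac{1}{p^k}$ to a \emph{global} set-theoretic section of $E_\eta$ via the reduced-form prescription, which in turn rests on $\Qadic{p}$ being the ascending union of the cyclic groups $\tfrac{1}{p^k}\Z$ whose generators are the chosen $e_k$'s.
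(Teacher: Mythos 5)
Your extension-theoretic packaging of all three bullets (sections of a central extension of $\Qadic{p}$ by $\Z$, with lifts $e_k$ of $\frac{1}{p^k}$) is a sound strategy — the survey under review only cites \cite{Latremoliere11c} for these facts, where the verifications are done directly on cocycles, so your route is a legitimate and arguably cleaner alternative. However, there is a concrete normalization error in your first paragraph. With the section $s'(q/p^k)=q\,e_k$ (reduced form), the case $k_2>k_1$ gives $\xi(a,b)=q_1\bigl(e_{k_1}-p^{k_2-k_1}e_{k_2}\bigr)$, so to recover the stated formula one needs $p^{k_2-k_1}e_{k_2}=e_{k_1}+\frac{J_{k_2}-J_{k_1}}{p^{k_1}}$; this telescopes correctly from the relation $p\,e_{k+1}-e_k=\frac{J_{k+1}-J_k}{p^k}$, equivalently $p^k e_k-e_0=J_k$, and \emph{not} from your stated relation $p\,e_{k+1}-e_k=J_{k+1}-J_k$, whose increments do not telescope to $\frac{J_{k_2}-J_{k_1}}{p^{k_1}}$ and hence do not produce $\xi_J$. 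Note that in your third bullet you do use the correct normalization $J_k:=p^k e_k-e_0$, so the surjectivity argument is fine once the first-bullet computation is redone with that relation; as written, the two halves of your proof use inconsistent relations. Also, if you want the cocycle identity for $\xi_J$ to follow from the extension picture rather than by direct verification, you must first exhibit a group in which $p^k e_k=e_0+J_k$ holds (otherwise the argument is circular); your fallback of direct case analysis is fine, but it must also handle cancellation in the case $k_1=k_2$ (where the reduced denominator exponent $r$ can drop below $k_1$), not merely the ordering of $k_1,k_2,k_3$.

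Two smaller points. In the second bullet, with the convention $d\beta(a,b)=\beta(a)+\beta(b)-\beta(a+b)$ (the one matching $\xi(a,b)=s(a)+s(b)-s(a+b)$), the correct coboundary is $\beta(q/p^k)=+\,q\,(L_k-n)/p^k$; your choice of sign yields $-\xi_L$. The converse direction is correct: composing the coboundary relations along $p\cdot\frac{1}{p^{k+1}}=\frac{1}{p^k}$ forces $L_k=p^k\beta(\frac{1}{p^k})-\beta(1)$, hence $L\in\Z$. Finally, $\mathrm{Ext}(\Qadic{p},\Z)$ classifies \emph{abelian} extensions, i.e.\ symmetric cocycles, whereas a general Schur multiplier a priori gives only a central extension; your bullet-3 computation does go through in that generality (each identity used takes place in the subgroup generated by a single $e_k$ together with central elements), and since $\xi_J$ is symmetric it in particular shows every $\Z$-valued $2$-cocycle on $\Qadic{p}$ is cohomologous to a symmetric one — but you should either say this explicitly or restrict to symmetric multipliers before speaking of ``the associated Abelian extension $E_\eta$.''
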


Schur multipliers provide us with a mean to describe and classify Abelian extensions of $\Qadic{p}$ by $\Z_p$. Our interest in Theorem (\ref{schur-thm}) lies in the remarkable observation that the $K_0$ groups of noncommutative solenoids are exactly given by these extensions:

\begin{theorem}\cite[Theorem 3.12]{Latremoliere11c}\label{K-theory-thm}
Let $p\in\N,p>1$ and let $\alpha = (\alpha_k)_{k\in\N} \in \Xi_p$. For any $k\in \N$, define $J_k = p^k \alpha_k - \alpha_0$, and note that by construction, $J \in \Z_p$. Let $\xi_J$ be the Schur multiplier of $\Qadic{p}$ defined in Theorem (\ref{schur-thm}), and let $\mathscr{Q}_J$ be the group with underlying set $\Z\times \Qadic{p}$ and operation:
\begin{equation*}
\left(z_1,r_1\right)\boxplus\left(z_2,r_2\right) = \left(z_1 + z_2 + \xi_J\left(r_1,r_2\right),r_1+r_2\right)
\end{equation*}
for all $(z_1,r_1),(z_2,r_2)\in\Z\times\Qadic{p}$. By construction, $\mathscr{Q}_J$ is an Abelian extension of $\Qadic{p}$ by $\Z$ given by the Schur multiplier $\xi_J$. 

Then:
\begin{equation*}
K_0 \left( \algebra_\alpha \right) = \mathscr{Q}_J
\end{equation*}
and, moreover, all tracial states of $\algebra_\alpha$ lift to a single trace $\tau$ on $K_0\left(\algebra_\alpha\right)$, characterized by:
\begin{equation*}
\tau: (1,0)\mapsto 1 \text{ and }\left(0,\frac{1}{p^k}\right)\mapsto \alpha_k\text{.}
\end{equation*}
Furthermore, we have:
\begin{equation*}
K_1\left(\algebra_\alpha\right) = \Qadicsq{p}\text{.}
\end{equation*}
\end{theorem}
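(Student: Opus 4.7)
The plan is to apply Theorem \ref{inductive-limit-thm} together with continuity of $K$-theory under inductive limits, yielding $K_j(\algebra_\alpha) = \varinjlim_n K_j(A_{\alpha_{2n}})$ for $j=0,1$. I would start from the standard computation (via Pimsner-Voiculescu) that $K_0(A_\theta) = \Z[1]\oplus\Z[p_\theta]$ (with $[p_\theta]$ the class of a Rieffel projection of trace $\theta$) and $K_1(A_\theta) = \Z[U_\theta]\oplus\Z[V_\theta]$, together with the canonical trace $\tau_\theta(a[1]+b[p_\theta])=a+b\theta$. On $K_1$, the connecting map $\varphi_{n*}$ is multiplication by $p$ in each $\Z$-summand, so the direct limit is exactly $\Qadicsq{p}$ by the very definition of $\Qadic{p}$ as the direct limit of $\Z\xrightarrow{\cdot p}\Z\to\cdots$.

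For $K_0$, clearly $\varphi_{n*}[1]=[1]$. Trace compatibility then forces $\varphi_{n*}[p_{\alpha_{2n}}]=a_n[1]+b_n[p_{\alpha_{2n+2}}]$ with $a_n+b_n\alpha_{2n+2}=\alpha_{2n}$, and iterating the $\Xi_p$-relation $p\alpha_{k+1}=\alpha_k+k_k$ gives $\alpha_{2n}=p^2\alpha_{2n+2}-(k_{2n}+pk_{2n+1})$, forcing $(a_n,b_n)=(-(k_{2n}+pk_{2n+1}),p^2)$. (When $\alpha_{2n+2}$ is rational the trace need not separate $K_0$-classes; in that case the same formula is justified instead by functoriality of the Pimsner-Voiculescu sequence applied to the crossed-product description from Proposition \ref{crossed-product}.) This computation exhibits a short exact sequence $0\to\Z\to K_0(\algebra_\alpha)\to\Qadic{p}\to 0$, where the $\Z$ is generated by $[1]$ and the quotient $\Qadic{p}$ arises as $\varinjlim(\Z\xrightarrow{\cdot p^2}\Z\to\cdots)$.

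To identify the extension class with that of $\xi_J$, I would iterate: a telescoping calculation yields $(\varphi_{n+m-1}\circ\cdots\circ\varphi_n)[p_{\alpha_{2n}}]=-C_{n,m}[1]+p^{2m}[p_{\alpha_{2n+2m}}]$ with $C_{n,m}=\sum_{i=0}^{2m-1}p^i k_{2n+i}$. The identity $p^k\alpha_k-\alpha_0=\sum_{j=0}^{k-1}p^j k_j$ (proved by induction from the $\Xi_p$-relation) then shows $C_{n,m}=(J_{2n+2m}-J_{2n})/p^{2n}$, the integrality of which is exactly the cocycle condition defining $\Z_p$. Choosing the set-theoretic section $s(q/p^{2n}):=q[p_{\alpha_{2n}}]$ for $q/p^{2n}$ in reduced form, evaluating $\xi(r_1,r_2)=s(r_1)+s(r_2)-s(r_1+r_2)$ at a common stage, and treating the three cases $k_1<k_2$, $k_1>k_2$, $k_1=k_2$ separately, recovers exactly the formula of $\xi_J$ from Theorem \ref{schur-thm}, giving $K_0(\algebra_\alpha)\cong\mathscr{Q}_J$. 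The trace formula then drops out from $\tau([p_{\alpha_{2n}}])=\alpha_{2n}$ under the identification $[p_{\alpha_{2n}}]\leftrightarrow(0,1/p^{2n})$, and uniqueness reduces to the standard fact that all tracial states of a rotation algebra agree on $K_0$. The main obstacle throughout is this cocycle matching: reconciling the reduced-form description of $\xi_J$ with the inductive-limit description indexed by even stages, while keeping the three case splits consistent, is where the bulk of the accounting lies.
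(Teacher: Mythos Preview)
The paper itself does not prove this theorem: it is quoted from \cite{Latremoliere11c}, with the paper explicitly stating that it ``refer[s] to \cite{Latremoliere11c} for the involved proof leading to it'' and that ``much work is needed, however, to identify the range of $K_0$.'' The only proof-adjacent content offered here is the remark that the element $(0,1/p^k)$ corresponds to a Rieffel--Powers projection in $A_{\alpha_{2k}}$ under the embedding of Theorem~\ref{inductive-limit-thm}---which is exactly your starting point.

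Your outline is the natural route and matches what the survey indicates is done in \cite{Latremoliere11c}: continuity of $K$-theory applied to the inductive system of Theorem~\ref{inductive-limit-thm}, connecting maps on $K_1$ given by multiplication by $p$ (yielding $\Qadicsq{p}$ immediately), connecting maps on $K_0$ read off from the $\Xi_p$-relation via the trace, and the resulting extension of $\Qadic{p}$ by $\Z$ identified with $\mathscr{Q}_J$ by computing the $2$-cocycle of a set-theoretic section. Your telescoping formula $C_{n,m}=(J_{2n+2m}-J_{2n})/p^{2n}$ is correct and is the key link to the $p$-adic integer $J$. You have also correctly flagged the two points that require care: the rational case, where the trace no longer separates $K_0$-classes and one must fall back on functoriality of the Pimsner--Voiculescu sequence (or an explicit generator computation) to pin down the connecting map; and the bookkeeping needed to reconcile the even-indexed inductive description with the reduced-form definition of $\xi_J$ in Theorem~\ref{schur-thm}. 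There is no visible gap in the strategy.
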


We observe, in particular, that given any Abelian extension of $\Qadic{p}$ by $\Z$, one can find, by Theorem (\ref{schur-thm}), a Schur multiplier of $\Qadic{p}$ of the form $\xi_J$ for some $J\in \Z_p$, and, up to an arbitrary choice of $\alpha_0 \in [0,1)$, one may form the sequence $\alpha = \left(\frac{\alpha_0+J_k}{p^k}\right)_{k\in\N}$, and check that $\alpha \in \Xi_p$; thus \emph{all} possible Abelian extensions, and \emph{only} Abelian extensions of $\Qadic{p}$ by $\Z$ are given as $K_0$ groups of noncommutative solenoids. With this observation, the $K_0$ groups of noncommutative solenoids are uniquely described by a $p$-adic integer modulo an integer, and the information contained in the pair $(K_0(\algebra_\alpha),\tau)$ of the $K_0$ group of a noncommutative solenoid and its trace, is contained in the pair $(J,\alpha_0)$ with $J\in\bigslant{\Z_p}{\Z}$ as defined in Theorem (\ref{K-theory-thm}). 

\begin{remark}\label{trace-range-rmk}
For any $p\in\N,p>1$ and $\alpha\in\Xi_p$, the range of the unique trace $\tau$ on $K_0(\algebra_\alpha)$, as described by Theorem (\ref{K-theory-thm}), is the subgroup $\Z\oplus \oplus_{k\in\N}\alpha_n\Z$. Let $\gamma = z+z_1\alpha_{n_1}+\ldots z_k\alpha_{n_k}$ be an arbitrary element of this set, where, to fix notations, we assume $n_1<\ldots<n_k$. Then, since $\alpha_{n+1} \equiv p \alpha_n \mod 1$ for any $n\in\N$, we conclude that we can rewrite $\gamma$ simply as $z'+y\alpha_{n_k}$, for some $z',y \in \Z$. Thus the range of our trace on $K_0(\algebra_\alpha)$ is given by:
\begin{equation*}
\tau\left(K_0\left(\algebra_\alpha\right)\right) = \left\{ z+ y \alpha_k : z,y \in \Z, k \in \N \right\}\text{.}
\end{equation*}
\end{remark}

We thus have a complete characterization of the $K$-theory of noncommutative solenoids. This noncommutative topological invariant, in turn, contains enough information to fully classify noncommutative solenoids in term of their defining multipliers. We refer to \cite[Theorem 4.2]{Latremoliere11c} for the complete statement of this classification; to keep our notations at a minimum, we shall state the corollary of \cite[Theorem 4.2]{Latremoliere11c} when working with $p$ prime:

\begin{theorem}\label{classification-thm}\cite[Corollary 4.3]{Latremoliere11c}
Let $p,q$ be two prime numbers and let $\alpha \in \Xi_p$ and $\beta\in\Xi_q$. Then the following are equivalent:
\begin{enumerate}
\item The noncommutative solenoids $\algebra_\alpha$ and $\algebra_\beta$ are *-isomorphic,
\item $p = q$ and a truncated subsequence of $\alpha$ is a truncated subsequence of $\beta$ or $(1-\beta_k)_{k\in\N}$.
\end{enumerate}
\end{theorem}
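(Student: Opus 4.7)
The plan is to combine the inductive-limit presentation of Theorem \ref{inductive-limit-thm} (for the implication $(2)\Rightarrow(1)$) with the K-theoretic invariants of Theorem \ref{K-theory-thm} and the Elliott classification of simple AT-algebras of real rank zero (for the implication $(1)\Rightarrow(2)$).

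For $(2)\Rightarrow(1)$, suppose first that there exist $n,m\in\N$ with $\alpha_{n+k}=\beta_{m+k}$ for all $k\in\N$. By Theorem \ref{inductive-limit-thm}, both $\algebra_\alpha$ and $\algebra_\beta$ are inductive limits of rotation algebras parameterized by the even-indexed entries of the defining sequence, with connecting morphisms $\varphi_\ell\colon U\mapsto U^p,\ V\mapsto V^p$ whose form is independent of the parameter. Since an inductive limit depends only on the tail of its defining system, aligning the two systems on their common tail (after adjusting parity if needed) yields $\algebra_\alpha\cong\algebra_\beta$. If instead $\alpha_{n+k}=1-\beta_{m+k}$, I would apply at each level the classical isomorphism $A_\theta\cong A_{1-\theta}$ induced by swapping the canonical unitaries $U_\theta\leftrightarrow V_\theta$, verify that this swap commutes with the connecting morphisms $\varphi_\ell$ (which treat $U$ and $V$ symmetrically), and pass to the inductive limits, combining with the previous truncation argument.

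For $(1)\Rightarrow(2)$, first use $K_1$ to identify the prime: Theorem \ref{K-theory-thm} gives $K_1(\algebra_\alpha)\cong\Qadicsq{p}$, so a *-isomorphism forces $\Qadicsq{p}\cong\Qadicsq{q}$ as abstract groups. Since $\Qadic{p}$ is $p$-divisible but not $q$-divisible for distinct primes (one cannot solve $qy=1$ in $\Qadic{p}$ when $q\neq p$), this forces $p=q$. For the sequence invariant, specialize to the simple case (infinite range of $\alpha$, Theorem \ref{simplicity-thm}); by \cite[Proposition 3.16]{Latremoliere11c}, $\algebra_\alpha$ is then simple AT of real rank zero. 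Elliott's theorem reduces the classification to the triple $(K_0,K_0^+,[1]_0)$ together with $K_1$, and the unique (hence faithful) trace embeds $K_0(\algebra_\alpha)$ as the ordered subgroup $G_\alpha=\{z+y\alpha_k : z,y\in\Z,\ k\in\N\}$ of $\R$ with order unit $1$ (Remark \ref{trace-range-rmk}). A standard squeeze argument shows that any ordered-group isomorphism between such subgroups of $\R$ sending $1\mapsto 1$ is the restriction of the identity on $\R$, so $G_\alpha=G_\beta$ as subsets of $\R$.

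Finally, I would extract the truncated-subsequence relation from $G_\alpha=G_\beta$. The filtration $G_\alpha=\bigcup_k(\Z+\Z\alpha_k)$ has successive terms of index $p$ (by $p\alpha_{k+1}\equiv\alpha_k\pmod{1}$), and similarly for $\beta$; since both filtrations exhaust the same group they are cofinal, so for each large $k$ there is $k'$ with $\Z+\Z\alpha_k=\Z+\Z\beta_{k'}$. Comparing generators in $[0,1)$ forces $\alpha_k=\beta_{k'}$ or $\alpha_k=1-\beta_{k'}$; the relation $p\alpha_{k+1}\equiv\alpha_k\pmod{1}$ for both sequences, together with the rigidity of the digit choices in $\{0,\ldots,p-1\}$ defining $\Xi_p$, propagates this identification consistently along the tails to yield the claimed relation. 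The main technical hurdle I expect is this propagation step, in particular showing that the ``$\alpha=\beta$'' versus ``$\alpha=1-\beta$'' alternative is selected coherently along the sequence; a secondary difficulty is the non-simple periodic rational case, where Elliott's theorem does not directly apply and one must instead use the bundle description of $\algebra_\alpha$ over $\solenoid_p^2$ from \cite[Theorem 3.17]{Latremoliere11c} to run a parallel analysis.
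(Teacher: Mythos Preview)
The paper under review is a survey; Theorem~\ref{classification-thm} is stated with a citation to \cite[Corollary 4.3]{Latremoliere11c} and is not proved here. What the surrounding text does indicate is that the classification in \cite{Latremoliere11c} proceeds via the pair $(K_0,\tau)$ together with $K_1$, using Theorem~\ref{K-theory-thm} and Remark~\ref{trace-range-rmk}. Your overall strategy---inductive limits for $(2)\Rightarrow(1)$, and $K_1$ to pin down the prime followed by an analysis of the ordered range of the trace for $(1)\Rightarrow(2)$---is therefore aligned with the approach the paper describes, and your implication $(2)\Rightarrow(1)$ is essentially complete as written.

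There is, however, a genuine gap in your $(1)\Rightarrow(2)$ that you did not flag. You write that in the simple case, \cite[Proposition 3.16]{Latremoliere11c} gives that $\algebra_\alpha$ is simple AT of real rank zero, and you then invoke Elliott's theorem. But the paper is explicit that Proposition 3.16 applies only when $\alpha$ has an \emph{irrational} entry, whereas Theorem~\ref{simplicity-thm} shows simplicity holds in the strictly larger ``aperiodic rational'' case as well (the example with $\alpha_k = 2/7^k$). In that case the building blocks $A_{\alpha_{2n}}$ are rational rotation algebras---homogeneous matrix bundles over $\T^2$, not simple and not obviously fitting into the AT/real-rank-zero framework---so your appeal to Elliott does not cover it. This is a separate case from the periodic (non-simple) one you already acknowledged; you now have \emph{three} regimes to handle (irrational, aperiodic rational, periodic rational), and your argument as written only addresses the first. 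The paper's own treatment, via the bundle description \cite[Theorem 3.17]{Latremoliere11c} for the rational cases and direct analysis of the trace range rather than Elliott, appears designed precisely to avoid this case split at the classification-machinery level.

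The two difficulties you did identify---coherent propagation of the $\alpha_k=\beta_{k'}$ versus $\alpha_k=1-\beta_{k'}$ alternative along the tail, and the non-simple case---are real, and the first in particular is where most of the combinatorial work in \cite{Latremoliere11c} lies. Your cofinality-of-filtrations idea is the right starting point, but you should expect to have to argue carefully that once $\Z+\Z\alpha_k=\Z+\Z\beta_{k'}$ forces a sign choice at one level, the compatibility $p\alpha_{k+1}\equiv\alpha_k\pmod 1$ together with the index-$p$ inclusions prevents the sign from flipping at the next.
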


Theorem (\ref{classification-thm}) is given in greater generality in \cite[Theorem 4.2]{Latremoliere11c}, where $p,q$ are not assumed prime; the second assertion of the Theorem must however be phrased in a more convoluted manner: essentially, $p$ and $q$ must have the same set of prime factors, and there is an embedding of $\Xi_p$ and $\Xi_q$ in a larger group $\Xi$, whose elements are still sequences in $[0,1)$, such that the images of $\alpha$ and $\beta$ for these embeddings are sub-sequences of a single element of $\Xi$. 

We conclude this section with an element of the computation of the $K_0$ groups in Theorem (\ref{K-theory-thm}). Given $\gamma = \left(0,\frac{1}{p^k}\right) \in K_0\left(\algebra_\alpha\right)$, if $\alpha_0$ is irrational, then there exists a Rieffel-Powers projection in $A_{\alpha_{2k}}$ whose image in $\algebra_\alpha$ for the embedding given by Theorem (\ref{inductive-limit-thm}) has $K_0$ class the element $\gamma$, whose trace is thus naturally given by Theorem (\ref{K-theory-thm}). Much work is needed, however, to identify the range of $K_0$ as the set of all Abelian extensions of $\Qadic{p}$ by $\Z$, and parametrize these, in turn, by $\bigslant{\Z_p}{\Z}$, as we have shown in this section.

We now turn to the question of the structure of the category of modules over noncommutative solenoids. In the next two sections, we show how to apply some constructions of equivalence bimodules to the case of noncommutative solenoids as a first step toward solving the still open problem of Morita equivalence for noncommutative solenoids.

\section{Forming projective modules over noncommutative solenoids from the inside out }

Projective modules for rotation algebras and higher dimensional noncommutative tori were studied by M. Rieffel (\cite{Rieffel88}). F. Luef has extended this work to build modules with a dense subspace of functions coming from modulation spaces (e.g., Feichtinger's algebra) with nice properties (\cite{Luef09}, \cite{Luef11}).   One approach to building projective modules over noncommutative solenoids is to build the projective modules from the ``inside out".

We first make some straightforward observations in this direction. We recall that, by Notation (\ref{solenoid-not}), for any $p\in\N,p>1$, and for any $\alpha\in\Xi_p$, where $\Xi_p$ is defined in Theorem (\ref{multipliers}), the C*-algebra $C^\ast\left(\Qadicsq{p},\Psi_\alpha\right)$, where the multiplier $\Psi_\alpha$ was defined in Theorem (\ref{multipliers}), is denoted by $\algebra_\alpha$. In this section, we will work with $p$ a prime number. Last, we also recall that by Notation (\ref{qtorus-not}), the rotation algebra for the rotation of angle $\theta\in[0,1)$ is denoted by $A_\theta$, while its canonical unitary generators are denoted by $U_\theta$ and $V_\theta$, so that $U_\theta V_\theta = e^{2i\pi\theta} V_\theta U_\theta$.

Theorem (\ref{K-theory-thm}) describes the $K_0$ groups of noncommutative solenoids, and, among other conclusions, state that there always exists a unique trace on the $K_0$ of any noncommutative solenoid, lifted from any tracial state on the C*-algebra itself. With this in mind, we state:

\begin{proposition}\label{proj-prop}
Let $p$ be a prime number, and fix $\alpha\in \Xi_p,$ with $\alpha_0\not\in\Q$. Let $\gamma = z + q\alpha_N$ for some $z,q \in \Z$ and $N\in\N$, with $\gamma>0$. Then there is a left projective module over $\algebra_\alpha$ whose $K_0$ class has trace $\gamma$, or equivalently, whose $K_0$ class is given by $\left(z,\frac{q}{p^k}\right) \in \Z\times\Qadic{p}$.
\end{proposition}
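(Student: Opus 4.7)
The plan is to construct the required module by embedding a suitable Rieffel--Powers type projection from a rotation subalgebra of $\algebra_\alpha$, using the inductive limit description of Theorem (\ref{inductive-limit-thm}). First, since $\alpha_0 \notin \Q$, the defining relation $p\alpha_{n+1} = \alpha_n + k_n$ with $k_n \in \{0,\ldots,p-1\}$ forces every $\alpha_n$ to be irrational. Iterating this relation yields an integer $j_N \in \Z$ with $\alpha_N = p^N\alpha_{2N} - j_N$, so that
\[
\gamma \;=\; z + q\alpha_N \;=\; (z - qj_N) + q p^N \alpha_{2N} \;\in\; \Z + \Z\alpha_{2N},
\]
exhibiting $\gamma$ as an element of the trace range of the irrational rotation algebra $A_{\alpha_{2N}}$.

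Next, I would invoke Rieffel's classification of projective modules over irrational rotation algebras from \cite{Rieffel88}: for any pair $(a, b) \in \Z^2 \cong K_0(A_\theta)$ with $a + b\theta > 0$, there exists a finitely generated projective left $A_\theta$-module whose $K_0$-class has trace $a + b\theta$, realized by a projection $P \in M_n(A_\theta)$ for some $n$. Apply this to $\theta = \alpha_{2N}$ and $(a, b) = (z - qj_N, qp^N)$, which is admissible since $\gamma > 0$. Via the embedding $A_{\alpha_{2N}} \hookrightarrow \algebra_\alpha$ of Theorem (\ref{inductive-limit-thm}), regard $P$ as a projection in $M_n(\algebra_\alpha)$ and set $\mathscr{M} = (\algebra_\alpha)^n P$, a finitely generated projective left $\algebra_\alpha$-module. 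Because the irrationality of $\alpha_0$ makes the range of $\alpha$ infinite, $\algebra_\alpha$ carries a unique tracial state (by the trace-uniqueness theorem at the end of Section 2), and its restriction to $A_{\alpha_{2N}}$ is the unique trace of that rotation algebra. Consequently the trace $\tau$ on $K_0(\algebra_\alpha)$ from Theorem (\ref{K-theory-thm}) satisfies $\tau\bigl([\mathscr{M}]\bigr) = \gamma$.

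Finally, the identification $[\mathscr{M}] = (z, q/p^N) \in \mathscr{Q}_J$ would follow from injectivity of $\tau$ on $K_0(\algebra_\alpha)$ when $\alpha_0 \notin \Q$: an element $(a, r) \in \ker\tau$ with $r = q/p^N$ in reduced form satisfies $a + q\alpha_N = 0$, which forces $q = 0$ and then $a = 0$ by irrationality of $\alpha_N$. Since the element $(z, q/p^N) \in \mathscr{Q}_J$ visibly has trace $\gamma$, injectivity pins $[\mathscr{M}]$ down to this element. The main obstacle I anticipate is precisely this last identification: the naive image of $[P]$ under $K_0(A_{\alpha_{2N}}) \to K_0(\algebra_\alpha)$ lives a priori in the subgroup $\Z \times p^{-2N}\Z \subseteq \Z \times \Qadic{p}$, and matching it to the stated form $(z, q/p^N)$ demands careful bookkeeping with the twisted group operation $\boxplus$ built from the Schur multiplier $\xi_J$ of Theorem (\ref{schur-thm})---in particular, verifying that the integer shift by $qj_N$ in the first coordinate is absorbed consistently by the cocycle $\xi_J$.
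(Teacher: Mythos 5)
Your proposal is correct and follows essentially the same route as the paper: reduce $\gamma$ to the trace range of an irrational rotation subalgebra coming from the inductive-limit picture of Theorem (\ref{inductive-limit-thm}), take a Rieffel projection there, and push it into $\algebra_\alpha$ to get the projective module. The only (harmless) differences are that the paper simply assumes $N$ even and uses a projection in $A_{\alpha_N}$ via \cite{Rieffel81}, whereas you pass to $A_{\alpha_{2N}}$ and use matrix projections from \cite{Rieffel88}, and you additionally justify the identification of the $K_0$-class via injectivity of the trace on $K_0(\algebra_\alpha)$ for irrational $\alpha_0$ --- a point the paper leaves as ``easy to check.''
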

\begin{proof} By Remark (\ref{trace-range-rmk}), $\gamma$ is the image of some class in $K_0(\algebra_\alpha)$ for the trace on this group. Now, since $\alpha_{N+1} = p\alpha_N + j$ for some $j\in \Z$ by definition of $\Xi_\alpha$, we may as well assume $N$ is even. As $K_0(\algebra_\alpha)$ is the inductive limit of $K_0(A_{\alpha_{k}})_{k\in2\N}$ by Theorem (\ref{inductive-limit-thm}),  $\gamma$ is the trace of an element of $K_0(A_{\alpha_N}))$, where $A_{\alpha_N}$ is identified as a subalgebra of $\algebra_{\alpha}$ (again using Theorem (\ref{inductive-limit-thm}).  By \cite{Rieffel81}, there is a projection $P_\gamma$ in $A_{\alpha_N}$ whose $K_0$ class has trace $\gamma$, and it is then easy to check that the left projective module $P\algebra_\alpha$ over $\algebra_\alpha$ fulfills our proposition.
\end{proof}

So, for example, with the notations of the proof of Proposition (\ref{proj-prop}), if $P_{\gamma}$ is a projection in $A_{\alpha_n}\subset \algebra_\alpha$ with trace $\gamma\in (0,1)$, one can construct the equivalence bimodule 
\begin{equation*}
\algebra_\alpha \quad - \quad \algebra_\alpha P_{\gamma} \quad - \quad P_{\gamma}\algebra_\alpha P_{\gamma}\text{.}
\end{equation*}
From this realization, not much about the structure of $P_{\gamma}\algebra_\alpha P_{\gamma}$ can be seen, although it is possible to write this $C^{\ast}$-algebra as a direct limit of rotation algebras. Let us now discuss this matter.

Suppose we have two directed sequences of $C^{\ast}$-algebras: 
\begin{equation*}
\begin{CD}
A_{0} @>\varphi_0>> A_{1} @>\varphi_1>> A_{2} @>\varphi_2>> \cdots
\end{CD}
\end{equation*}
and 
\begin{equation*}
\begin{CD}
B_{0} @>\psi_0>> B_{1} @>\psi_1>> B_{2} @>\psi_2>> \cdots
\end{CD}
\end{equation*}
Suppose further that for each $n\in\N$ there is an equivalence bimodule $X_n$ between $A_{n}$ and $B_{n}$
$$ A_n - X_n - B_n,$$ and that the $(X_n)_{n\in\N}$ form a directed system, in the following sense: there exists a direct system of module monomorphisms  
\begin{equation*}
\begin{CD}
X_{0} @>i_0>> X_{1} @>i_1>> X_{2} @>i_2>> \cdots
\end{CD}
\end{equation*}
satisfying, for all $f,g \in X_n$ and $b\in B_n$:
\begin{align*}
\langle i_n(f), i_n(g) \rangle_{B_{n+1}}&=\psi_n(\langle f, g \rangle_{B_{n}})
\intertext{and}
i_n(f\cdot b) &= i_n(f)\cdot \psi_n(b)\text{,}
\end{align*}
with analogous but symmetric equalities holding for the $X_n$ viewed as left-$A_n$ modules. 

Now let $\mathcal{A}$ be the direct limit of $(A_n)_{n\in\N}$, $\mathcal{B}$ be the direct limit of $(B_n)_{n\in\N}$ and $\mathcal{X}$ be the direct limit of $(X_n)_{n\in\N}$ (completed in the natural $C^{\ast}$-module norm). Then $\mathcal{X}$ is an $\mathcal{A}-\mathcal{B}$ bimodule.  
If one further assumes that the algebra of adjointable operators on ${\mathcal X}$ viewed as a $\mathcal{A}-\mathcal{B}$ bimodule, ${\mathcal L}({\mathcal{X}}),$  can be obtained via an appropriate limiting process from the sequence of adjointable operators $\{{\mathcal L}(X_n)\}_{n=1}^{\infty}$  ( where each $X_n$ is a $A_n-B_n$ bimodule), 
then in addition one has that $\mathcal{X}$ is a strong Morita equivalence bimodule between  $\mathcal{A}$ and $\mathcal{B}.$

So suppose that $\gamma\in (0,1)$ is as in the statement of Proposition (\ref{proj-prop}), for some $\alpha\in \Xi_p$ not equal to zero, and suppose that we know that there is a positive integer $N$ and a projection $P_\gamma$ in $A_{\alpha_N}$ whose $K_0$ class has trace $\gamma.$  Again, without loss of generality, we assume that $N$ is even.  Then setting  
$$A_n=A_{\alpha_{N+2n}},\;X_n=A_{\alpha_{N+2n}}P_\gamma,\;\text{and}\;\; B_n= P_\gamma A_{\alpha_{N+2n}}P_\gamma,$$ 
all of the conditions in the above paragraphs hold {\it a priori}, since 
$\algebra_\alpha$ is a direct limit of the $A_{\alpha_{N+2n}},$ so that certainly ${\mathcal B}=P_\gamma\algebra_\alpha P_\gamma$ is a direct limit of the $P_\gamma A_{\alpha_{N+2n}}P_\gamma,$ and ${\mathcal X}= \algebra_\alpha P_{\gamma}$ can be expressed as a direct limit of the $X_n=A_{\alpha_{N+2n}}P_\gamma,$ again by construction, with the desired conditions on the adjointable operators satisfied by construction.  

It would be interesting to see how far this set-up could be extended to more general directed systems of Morita equivalence bimodules over directed systems of $C^{\ast}$-algebras, but we leave this project to a future endeavor.

We discuss very simple examples, to show how the directed system of bimodules is constructed.
\begin{example}
Fix an irrational $\alpha_0\in [0,1)$, let $p=2$, and consider $\alpha\in \Xi_2$ given by 
$$\alpha\; =\;(\alpha_0, \alpha_1=\frac{\alpha_0}{2}, \alpha_2=\frac{\alpha_0}{4},\cdots, \alpha_n=\frac{\alpha_n}{2^n},\cdots, ),$$
Consider $P_{\alpha_0}\in A_{\alpha_0}\subset A_{\alpha_1}$ a projection of trace $\alpha_0=2\alpha_1.$
The bimodule $$A_{\alpha_0}- A_{\alpha_0}\cdot P_{\alpha_0} - P_{\alpha_0}A_{\alpha_0}P_{\alpha_0}$$
is equivalent to Rieffel's bimodule 
$$A_{\alpha_0}-\overline{C_c(\mathbb R)}- A_{\frac{1}{\alpha_0}}=B_0.$$
Let $\beta_0=\frac{1}{\alpha_0}.$  Rieffel's theory, specifically Theorem 1.1 of \cite{Rieffel83}, again shows there is a bimodule
$$A_{\alpha_2}- A_{\alpha_2}\cdot P_{\alpha_0} - P_{\alpha_0}A_{\alpha_2}P_{\alpha_0}$$
is the same as 
$$A_{\alpha_2}- A_{\alpha_2}\cdot P_{4\alpha_2} - P_{4\alpha_2}A_{\alpha_2}P_{4\alpha_2}$$
which is equivalent to Rieffel's bimodule
$$A_{\alpha_2}-\overline{C_c(\mathbb R\times F_4})- C(\mathbb T\times F_4)\rtimes_{\tau_1}\mathbb Z=B_1,$$
where $F_4=\mathbb Z/4\mathbb Z,$ and the action of $\mathbb Z$ on $\mathbb T\times F_4$ is given by multiples of $(\frac{\beta_2}{4},[1]_{F_4}),$ for $\beta_2=\frac{1}{\alpha_2},$ i.e. multiples of 
$(\frac{1}{\alpha_0},[1]_{F_4}]),$ i.e. multiples of  $(\beta_0,[1]_{F_4}).$

At the $n^{th}$ stage, using Theorem 1.1 of \cite{Rieffel83} again, we see that  
 $$A_{\alpha_{2n}}- A_{\alpha_{2n}}\cdot P_{\alpha_0} - P_{\alpha_0}A_{\alpha_{2n}}P_{\alpha_0}$$
is the same as 
$$A_{\alpha_{2n}}-A_{\alpha_{2n}}\cdot P_{2^n\alpha_n} - P_{2^n\alpha_n}A_{\alpha_n}P_{2^n\alpha_n}$$
which is equivalent to 
$$A_{\alpha_n}-\overline{C_c(\mathbb R\times F_{4^n}})- C(\mathbb T\times F_{4^n})\rtimes_{\tau_n}\mathbb Z=B_n,$$
where the action of $\mathbb Z$ on $\mathbb T\times F_{4^n}$ is given by multiples of $(\frac{\beta_{2n}}{4^n},[1]_{F_{4^n}}),$ for $\beta_{2n}=\frac{1}{\alpha_{2n}}=\frac{4^n}{\alpha_0},$ i.e. multiples of 
$(\frac{1}{\alpha_0},[1]_{F_{4^n}}),$ i.e. multiples of  $(\beta_0,[1]_{F_{4^n}}),$ for $F_{4^n}=\mathbb Z/4^n\mathbb Z.$

From calculating the embeddings, we see that for $\alpha=(\alpha_0,\frac{\alpha_0}{2},\cdots, \frac{\alpha}{2^n},\cdots )\in \Xi_2,$ we have that 
$${\mathcal A}^{\mathcal S}_{\alpha}$$ is strongly Morita equivalent to a direct limit ${\mathcal B}$ of the $B_n.$ The structure of ${\mathcal B}$ is not clear in this description, although each $B_n$ is seen to be a variant of a rotation algebra. As expected, one calculates 
$$tr(K_0({\mathcal A}^{\mathcal S}_{\alpha}))=\alpha_0\cdot tr(K_0({\mathcal B})).$$
\end{example}

\section{Forming projective modules over noncommutative solenoids using $p$-adic fields }

Under certain conditions, one can construct equivalence bimodules for $\algebra_\alpha$ ($\alpha\in\Xi_p$,$p$ prime) by using a construction of M. Rieffel \cite{Rieffel88}.  The idea is to first embed $\Gamma = \Qadicsq{p}$ as a co-compact `lattice' in a larger group $M$, and the quotient group $\bigslant{M}{\Gamma}$ will be exactly the solenoid $\solenoid_p$.  We thank Jerry Kaminker and Jack Spielberg for telling us about  this trick. 

We start with a brief description of the field of $p$-adic numbers, with $p$ prime. Algebraically, the field $\Q_p$ is the field of fraction of the ring of $p$-adic integers $\Z_p$ --- we introduce $\Z_p$ as a group, though there is a natural multiplication on $\Z_p$ turning it into a ring. A more analytic approach is to consider $\Q_p$ as the completion of the field $Q$ for the $p$-adic metric $d_p$, defined by $d_p(r,r') = |r-r'|_p$ for any $r,r'\in\Q$, where $|\cdot|_p$ is the $p$-adic norm defined by:
\begin{equation*}
|r| = \begin{cases}
p^{-n} \text{ if $r\not=0$ and where $r = p^n \frac{a}{b}$ with $a,b$ are both relatively prime with $p$},\\
0 \text{ if $r=0$.}
\end{cases}
\end{equation*}
If we endow $\Q$ with the metric $d_p$, then series of the form:
\begin{equation*}
\sum_{j=k}^\infty a_j p^j
\end{equation*}
will converge, for any $k \in \Z$ and $a_j \in \{0,\ldots,p-1\}$ for all $j=k,\ldots$. This is the $p$-adic expansion of a $p$-adic number. One may easily check that addition and multiplication on $\Q$ are uniformly continuous for $d_p$ and thus extend uniquely to $\Q_p$ to give it the structure of a field. Moreover, one may check that the group $\Z_p$ of $p$-adic integer defined in Section 3 embeds in $\Q_p$ as the group of $p$-adic numbers of the form $\sum_{j=0}^\infty a_j p^j$ with $a_j \in \{0,\ldots,p-1\}$ for all $j\in\N$. Now, with this embedding, one could also check that $\Z_p$ is indeed a subring of $\Q_p$ whose field of fractions is $\Q_p$ (i.e. $\Q_p$ is the smallest field containing $\Z_p$ as a subring) and thus, both constructions described in this section agree. Last, the quotient of the (additive) group $\Q_p$ by its subgroup $\Z_p$ is the Pr\"ufer $p$-group $\Z(p^\infty) = \{ z \in \T : \exists n \in \N \quad z^{\left(p^n\right)} = 1\}$.

\subsection{Embedding $\mathbb Z(\frac{1}{p})$ as a lattice in a self-dual group }

Since $\Q_p$ is a metric completion of $\Q$ and $\Qadic{p}$ is a subgroup of $\Q$, we shall identify, in this section, $\Qadic{p}$ as a subgroup of $\Q_p$ with no further mention. We now define a few group homomorphisms to construct a short exact sequence at the core of our construction.

Let $\omega : \R\rightarrow \solenoid_p$ be the standard ``winding line'' defined for any $t\in \R$ by:
\begin{equation*}
\omega(t)=\left(e^{2\pi it},e^{2\pi i\frac{t}{p}}, e^{2\pi i\frac{t}{p^2}},\cdots, e^{2\pi i\frac{t}{p^n}}, \cdots \right)\text{.}
\end{equation*}

Let $\gamma \in \Q_p$ and write $\gamma = \sum_{j=k}^\infty a_jp^j$ for a (unique) family $(a_j)_{j=k,\ldots}$ of elements in $\{0,\ldots,p-1\}$. We define the sequence $\zeta(\gamma)$ by setting for all $j\in \N$:
\begin{equation*}
\zeta_j(\gamma)=e^{2\pi i\left(\sum_{m=k}^{j}\frac{a_{m}}{p^{j-m+k}}\right)}
\end{equation*}
with the convention that $\sum_j^k \cdots$ is zero if $k<j$. 

We thus may define the map
\begin{equation*}
\Pi:\begin{cases}
\Q_p\times \R &\longrightarrow \solenoid_p\\
\gamma &\longmapsto\Pi(\gamma, t)= \zeta_j(\gamma)\cdot \omega(t)\text{.}
\end{cases}
\end{equation*}

If we set
\begin{equation*}
\iota:\begin{cases}
\Qadic{p}&\longrightarrow  \Q_p\times \R\\
r &\longmapsto \iota(r)=(r,-r)\text{,}
\end{cases}
\end{equation*}
then one checks that the following is an exact sequence:\
\begin{equation*}
\begin{CD}
 1 @>>> \Qadic{p} @>\iota>> \Q_p\times \R @>\Pi>> \solenoid_p @>>> 1
\end{CD}
\end{equation*}

It follows that there is an exact sequence 
\begin{equation*}
\begin{CD}
 1 @>>> \Qadicsq{p}  @>>> [\Q_p\times \R]\times [\Q_p\times \R] @>>> \solenoid_p\times\solenoid_p @>>> 1\text{.}
\end{CD}
\end{equation*}

Indeed, we will show later that it is possible to perturb the embeddings of the different terms in $\Qadicsq{p}$ by elements of $\Q_p\setminus\{0\}$ and $\R\setminus\{0\}$ to obtain a family of different embeddings of $\Qadicsq{p}$ into $[\Q_p\times \R]^2$.

We now observe that $M = \Q_p\times \R$ is self-dual. We shall use the following standard notation:
\begin{notation}
The Pontryagin dual of a locally compact group $G$ is denoted by $\dual{G}$. The dual pairing between a group and its dual is denoted by $\left<\cdot,\cdot\right>: G\times\dual{G}\rightarrow\T$.
\end{notation}

Let us show that $M\cong\widehat{M}$. To every $x\in \Q_p$, we can associate the character
\begin{equation*}
\chi_x : q\in \Q_p \mapsto e^{2i\pi i \{x\cdot q\} }
\end{equation*}
where $\{x\cdot q\}_p$ is the fractional part of the product $x\cdot q$ in $\Q_p,$ i.e. it is the sum of the terms involving the negative powers of $p$  in the $p$-adic expansion of $x\cdot q$. The map $x\in\Q_p \mapsto \chi_x \in \dual{Q_p}$ is an isomorphism of topological group. Similarly, every character of $\R$ is of the form $\chi_r:t \in \R \mapsto e^{2i\pi rt}$ for some $r\in \R$. Therefore every character of $M$ is given by
\begin{equation*}
\chi_{(x,r)}: (q,t)\in\Q_p\times\R  \longmapsto \chi_x(q)\chi_r(t)
\end{equation*}
for some $(x,r)\in\Q_p\times\R$ (see \cite{HR70}) for further details on characters of specific locally compact abelian groups). It is possible to check that the map $(x,r)\mapsto \chi_{(x,r)}$ is a group isomorphism between $M$ and $\dual{M},$ so that $M=\Q_p\times \R$ is indeed self-dual.

\subsection{The Heisenberg representation and the Heisenberg equivalence bimodule of Rieffel}

In this section, we write $\Gamma=\Qadicsq{p}$ where $p$ is some prime number, and now let $M=[\Q_p\times \R]$.  We have shown in the previous section that $M$ is self-dual, since both $\Q_p$ and $\R$ are self-dual.  Now suppose there is an embedding $\iota:\Gamma\rightarrow M\times \dual{M}.$  Let the image $\iota(\Gamma)$ be denoted by $D$. In the case we are considering, $D$ is a discrete co-compact subgroup of $M\times \dual{M}$.  Following the method of M. Rieffel \cite{Rieffel88}, the {\bf Heisenberg multiplier} $\eta:(M\times \dual{M})\times (M\times \dual{M})\to \mathbb T$  is defined by:
$$\eta((m,s),(n,t))=\langle m, t\rangle,\; (m,s), (n,t)\in   M\times \dual{M}.$$  (We note we use the Greek letter `$\eta$' rather than Rieffel's `$\beta$', because we have used `$\beta$' elsewhere.
Following Rieffel, the  {\bf symmetrized version} of $\eta$ is denoted by the letter $\rho,$ and is the multiplier defined by:
\begin{equation*}
\rho((m,s),(n,t))=\eta((m,s),(n,t))\overline{\eta((n,t),(m,s))},\; (m,s), (n,t)\in   M\times \dual{M}\text{.} 
\end{equation*}

M. Rieffel \cite{Rieffel88} has shown that $C_C(M)$ can be given the structure of a left  $C^{\ast}(D,\eta)$ module, as follows.
One first constructs an $\eta$-representation of $M\times \dual{M}$ on $L^2(M),$ defined as $\pi,$ where 
\begin{equation*}
\pi_{(m,s)}(f)(n)=\left<n,s\right> f(n+m),  (m,s)\in M\times \dual{M},\;n\in M\text{.} 
\end{equation*}
When the representation $\pi$ is restricted to $D,$ we still have a projective $\eta$-representation of $D,$ on $L^2(M),$ and its integrated form gives $C_C(M)$ the structure of a left $C^{\ast}(D,\eta)$ module, i.e. for $\Phi\in C_C(D,\eta),\; f\in C_C(M),$
\begin{equation*}
\begin{split}
\pi(\Phi)\cdot f (n)&=\sum_{(d,\chi)\in D}\Phi((d,\chi))\pi_{(d,\chi)}(f)(n)\\
&=\sum_{(d,\chi)\in D}\Phi((d,\chi))\left<n,\chi\right>f(n+d)\text{.}
\end{split}
\end{equation*}

There is also a $C_C(D,\eta)$ valued inner product defined on $C_C(M)$ given by:
$$\langle f, g\rangle_{C_C(D,\eta)}=\int_M f(n)\overline{\pi_{(d,\chi)}(g)(n)}dn=\;\int_M f(n)\overline{\left<n,\chi\right>g(n+d)}dn.$$

Moreover, Rieffel has shown that setting 
\begin{equation*}
D^{\perp}=\{(n,t)\in M\times \dual{M}: \forall (m,s)\in D\quad \rho((m,s),(n,t))=1\}\text{,} 
\end{equation*}
$C_C(M)$ has the structure of a right $C^{\ast}(D^{\perp},\overline{\eta})$ module.
Here the right module structure is given for all $f\in C_c(M)$, $\Omega\in C_c(D^\perp)$ and $n \in M$ by:
\begin{equation*}
f\cdot \Omega (n)=\sum_{(c,\xi)\in D^{\perp}}\pi_{(c,\xi)}^*(f)(n)\Omega(c,\xi),
\end{equation*}
and the $C_C(D^{\perp},\overline{\eta})$-valued inner product is given by 
\begin{equation*}
\langle f, g\rangle_{C_C(D^{\perp},\overline{\eta})}(c,\xi)=\int_M\overline{f(n)}\pi_{(c,\xi)}(g)(n)dn=\int_M\overline{f(n)}\left<n,\xi\right>g(n+c)dn\text{,}
\end{equation*}
where $f, g \in C_C(M),\; \Omega\in C_C(D^{\perp},\overline{\eta}),$ and $(c,\xi)\in D^{\perp}.$

Moreover, Rieffel shows in \cite[Theorem 2.12]{Rieffel88} that $C^{\ast}(D,\eta)$ and  $C^{\ast}(D^{\perp},\overline{\eta})$ are strongly Morita equivalent, with the equivalence bimodule being the completion of $C_C(M)$ in the norm defined by the above inner products.

In order to construct explicit bimodules, we first define the multiplier $\eta$ more precisely, and then discuss different embeddings of $\Qadicsq{p}$ into $M\times \dual{M}.$

In the case examined here, the Heisenberg multiplier $\eta: [\Q_p\times \R]^2\times [\Q_p\times \R]^2\to \mathbb T$ is given by:
\begin{definition} The Heisenberg multiplier $\eta: [\Q_p\times \R]^2\times [\Q_p\times \R]^2\to \T$ is defined by 
\begin{equation*}
\eta[((q_1,r_1),(q_2,r_2)),((q_3,r_3),(q_4,r_4))]= e^{2\pi i r_1r_4}e^{2\pi i \{q_1q_4\}_p}\text{,}
\end{equation*}
where $\{q_1q_4\}_p$ is the fractional part of the product $q_1\cdot q_4,$ i.e. the sum of the terms involving the negative powers of $p$  in the $p$-adic expansion of $q_1q_4.$
\end{definition}
 
The following embeddings of $\Qadicsq{p}$ in $[\Q_p\times\R]^2$ will prove interesting:
\begin{definition}
For $\theta\in \R,\theta\not=0,$ we define $\iota_{\theta}:\Qadicsq{p}\rightarrow [\Q_p\times \R]^2$ by 
\begin{equation*}
\iota_{\theta}(r_1, r_2)=[(r_1, \theta\cdot r_1), (r_2, r_2)]\text{.}
\end{equation*}
\end{definition}

We examine the structure of the multiplier $\eta$ more precisely and then discuss different embeddings of $\Qadicsq{p}$ into $[\Q_p\times \R]^2$ and their influence on the different equivalence bimodules they allow us to construct.

We start by observing that for $r_1,r_2,r_3,r_4\in \Qadic{p}$:
\begin{equation*}
\begin{split}
\eta(\iota_{\theta}(r_1,r_2)),\iota_{\theta}(r_3,r_4))\ &= e^{2\pi i \{r_1 r_4\}_p}e^{2\pi i \theta r_1r_4}\\
&=e^{2\pi i r_1r_4}e^{2\pi i \theta r_1r_4}=e^{2\pi i (\theta+1)r_1r_4}\text{.}
\end{split}
\end{equation*}
(Here we used the fact that for $r_i, r_j\in \mathbb Z(\frac{1}{p}),\;\{ r_i r_j\}_p\equiv r_ir_j$ modulo $\mathbb Z.$)

One checks that  setting $D_{\theta}=\iota_{\theta}\left(\Qadicsq{p}\right)$, the $C^{\ast}$-algebra $C^{\ast}(D_{\theta},\eta)$ is exactly *-isomorphic to the noncommutative solenoid $\algebra_\alpha$ for 
\begin{equation*}
\alpha=\left(\theta+1, \frac{\theta+1}{p},\cdots, \frac{\theta+1}{p^n},\cdots \right) = \left(\frac{\theta+1}{p^n}\right)_{n\in\N} \text{.}
\end{equation*}

For this particular embedding of $\Qadicsq{p}$ as the discrete subgroup $D$ inside $M\times \dual{M},$ we calculate that  
\begin{equation*}
D_{\theta}^{\perp}=\left\{\left(r_1,-\frac{r_1}{\theta}\right),(r_2,-r_2):\;r_1,\;r_2\in \Qadic{p} \right\}\text{.}
\end{equation*}
Moreover,
\begin{equation*}
\overline{\eta}\left(\left[\left(r_1,-\frac{r_1}{\theta}\right),\left(r_2,-r_2\right)\right],\left[\left(r_3,-\frac{r_3}{\theta}\right),\left(r_4,-r_4\right)\right]\right)=e^{-2\pi i(\frac{1}{\theta}+1)r_1r_4}\text{.}
\end{equation*}
It is evident that $C^{\ast}(D_{\theta}^{\perp},\eta)$ is also a non-commutative solenoid $\algebra_\beta$ where $\beta=\left(1-\frac{\theta+1}{p^n\theta}\right)_{n\in\N}$.

Note that for 
\begin{equation*}
\alpha=\left(\theta+1, \frac{\theta+1}{p},\cdots, \frac{\theta+1}{p^n},\cdots \right),
\end{equation*}
and 
\begin{equation*}
\beta=\left(1-\frac{\theta+1}{p^n\theta}\right)_{n\in\N},
\end{equation*}
we have
\begin{equation*}
\theta \cdot \tau\left(K_0\left(\algebra_\alpha\right)\right) =\tau\left(K_0\left(\algebra_\beta\right)\right)
\end{equation*}
with the notations of Theorem (\ref{K-theory-thm}). Thus in this case we do see the desired relationship mentioned in Section 4: the range of the trace on the $K_0$ groups of the two $C^{\ast}$-algebras are related via multiplication by a positive constant.

We can now generalize our construction above as follows. 
\begin{definition}\label{iota-def}
For any $x\in\Q_p\setminus\{0\}$, and any $\theta\in\R\setminus\{0\},$ there is an embedding 
\begin{equation*}
\iota_{x,\theta}:\Qadicsq{p}\to [\Q_p\times\R]^2
\end{equation*}
defined for all $r_1,r_2\in \Qadic{p}$ by 
\begin{equation*}
\iota_{x,\theta}(r_1,r_2)=[(x\cdot r_1,\theta\cdot r_1), (r_2, r_2)]\text{.}
\end{equation*}
\end{definition}

Then, we shall prove that for all $\alpha\in\Xi_p$ there exists $x\in\Q_p\setminus\{0\}$ and $\theta\in\R\setminus\{0\}$ such that, by setting
\begin{equation*}
D_{x,\theta}=\iota_{x,\theta}\left(\Qadicsq{p}\right)
\end{equation*}
the twisted group C*-algebra $C^\ast(D,\eta)$ is *-isomorphic to $\algebra_\alpha$. 

As a first step, we prove:
\begin{lemma}\label{a-lemma}
Let $p$ be prime, and let $M=\Q_p\times \R.$  Let $(x,\theta)\in [{\Q}_p\setminus\{0\}]\times [\R\setminus\{0\}],$ and define $\iota_{x,\theta}: \Qadicsq{p}\to [\Q_p\times\R]^2\cong M\times \dual{M}$ by:
\begin{equation*}
\iota_{x,\theta}(r_1,r_2)=[(x\cdot r_1,\theta\cdot r_1), (r_2, r_2)]\text{ for all $r_1,r_2\in\Qadic{p}$.}
\end{equation*}
Let $\eta$ denote the Heisenberg cocycle defined on $[M\times \dual{M}]^2$ and let
\begin{equation*}
D=\iota_{x,\theta}\left(\Qadicsq{p}\right)\text{.}
\end{equation*}
Then 
\begin{equation*}
D_{x,\theta}^{\perp}=\left\{\left[(t_1,-t_1), \left(x^{-1}t_2,-\frac{t_2}{\theta}\right)\right]: t_1,t_2\in \Qadic{p} \right\}.
\end{equation*}
\end{lemma}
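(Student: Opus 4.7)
The plan is to unpack the definition of $D_{x,\theta}^{\perp}$ via the symmetrized Heisenberg multiplier $\rho$, decouple the resulting constraint into two independent conditions, and then reduce both to a single auxiliary claim about characters of $\Qadic{p}$. A direct computation using the explicit formula for $\eta$ gives, for $B = ((c,u),(d,v))$,
\begin{equation*}
\rho(\iota_{x,\theta}(r_1,r_2), B) = \exp\Bigl(2\pi i\bigl(\{xr_1 d\}_p + \theta r_1 v - \{c r_2\}_p - u r_2\bigr)\Bigr).
\end{equation*}
Setting $r_2 = 0$, and then $r_1 = 0$, splits the condition ``$\rho(\iota_{x,\theta}(r_1,r_2), B) = 1$ for all $r_1, r_2 \in \Qadic{p}$'' into two independent requirements: $\{c r_2\}_p + u r_2 \in \Z$ for all $r_2 \in \Qadic{p}$, and $\{x r_1 d\}_p + \theta r_1 v \in \Z$ for all $r_1 \in \Qadic{p}$. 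Conversely, these two requirements together imply the original for all $(r_1,r_2)$ by linearity.

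Both requirements are instances of the following auxiliary claim, whose proof is the main technical step: if $(\alpha,\gamma) \in \Q_p \times \R$ satisfies $\{\alpha \beta\}_p + \gamma \beta \in \Z$ for every $\beta \in \Qadic{p}$, then $\alpha \in \Qadic{p}$ and $\gamma = -\alpha$. The ``if'' direction is immediate from $\{r\}_p \equiv r \pmod{\Z}$ for $r \in \Qadic{p}$. For the converse, evaluating at $\beta = 1$ gives $\{\alpha\}_p + \gamma = n$ for some $n \in \Z$. Evaluating at $\beta = 1/p^k$, and using the identity
\begin{equation*}
p^k \{\alpha/p^k\}_p = \alpha_{(k)} := \sum_{K \le j < k} a_j p^j,
\end{equation*}
where $\alpha = \sum_{j \ge K} a_j p^j$ is the $p$-adic expansion, produces $\alpha_{(k)} + \gamma \in p^k \Z$ for every $k \in \N$. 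Eliminating $\gamma$ via $\gamma = n - \{\alpha\}_p$ yields $\alpha^{+}_{(k)} := \alpha_{(k)} - \{\alpha\}_p \equiv -n \pmod{p^k}$ for every $k$. Since the $\alpha^{+}_{(k)}$ are precisely the integer truncations of the $p$-adic integer part $\alpha^{+} := \alpha - \{\alpha\}_p \in \Z_p$, passing to the $p$-adic limit forces $\alpha^{+} = -n$ in $\Z_p$; injectivity of $\Z \hookrightarrow \Z_p$ then gives $\alpha^{+} = -n$ as an integer, so $\alpha = \{\alpha\}_p - n \in \Qadic{p}$, and substituting back yields $\gamma = -\alpha$.

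Applying the auxiliary claim to the two decoupled conditions yields $c \in \Qadic{p}$ with $u = -c$, and $xd \in \Qadic{p}$ with $\theta v = -xd$. Setting $t_1 = c$ and $t_2 = xd$ gives $d = x^{-1} t_2$ and $v = -t_2/\theta$, matching the description of $D_{x,\theta}^{\perp}$ asserted in the lemma. The reverse inclusion is a quick direct verification: for $B = ((t_1, -t_1), (x^{-1} t_2, -t_2/\theta))$ with $t_1, t_2 \in \Qadic{p}$, the exponent in $\rho(\iota_{x,\theta}(r_1,r_2), B)$ equals $\{r_1 t_2\}_p - r_1 t_2 - \{t_1 r_2\}_p + t_1 r_2$, which lies in $\Z$ since $r_1 t_2, t_1 r_2 \in \Qadic{p}$ and $\{r\}_p \equiv r \pmod{\Z}$ for such $r$.
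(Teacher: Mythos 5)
Your proof is correct, and its skeleton is the same as the paper's: compute the symmetrized cocycle $\rho$ on $\iota_{x,\theta}(r_1,r_2)$ against a general element, then decouple by setting $r_2=0$ and $r_1=0$ (the paper specializes to $r_1=p^n$, $r_2=0$ and vice versa). Where you go beyond the paper is in the necessity direction: the paper's argument reads ``if we choose $s_2=-\frac{t_2}{\theta}$ for some $t_2\in\Qadic{p}$, we need $q_2=x^{-1}t_2$,'' which verifies consistency of the asserted parametrization but does not rule out solutions $(q_2,s_2)$ not of that form. Your auxiliary rigidity claim --- that $\{\alpha\beta\}_p+\gamma\beta\in\Z$ for all $\beta\in\Qadic{p}$ forces $\alpha\in\Qadic{p}$ and $\gamma=-\alpha$, proved by testing at $\beta=1$ and $\beta=p^{-k}$ and comparing the integer truncations of the $p$-adic integer part of $\alpha$ --- supplies exactly this missing ``only these'' step, and your explicit check of the reverse inclusion (using $\{r\}_p\equiv r \bmod \Z$ for $r\in\Qadic{p}$) completes both containments. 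So your write-up is a more rigorous version of the paper's computation rather than a different method; what it buys is a clean, reusable criterion identifying which characters of $M\times\dual{M}$ annihilate $D_{x,\theta}$ under $\rho$, which would also be the natural tool if one wanted to vary the embedding $\iota_{x,\theta}$ as in the paper's later remarks.
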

\begin{proof}
By definition,
\begin{equation*}
\begin{split}
D_{x,\theta}^{\perp}&=\left\{[(q_1,s_1), (q_2,s_2)]:\forall r_1,r_2\in\Qadic{p}\quad \rho([\iota_{x,\theta}(r_1,r_2)], [(q_1,s_1), (q_2,s_2)])=1\right\}\\
&=\left\{[(q_1,s_1), (q_2,s_2)]: \forall r_1,r_2\in\Qadic{p}\quad\rho([(x\cdot r_1,\theta\cdot r_1), (r_2, r_2)],  [(q_1,s_1), (q_2,s_2)])=1 \right\}\\
&=\left\{[(q_1,s_1), (q_2,s_2)]:\forall r_1,r_2\in\Qadic{p}\quad e^{2\pi i \theta r_1s_2}e^{2\pi i \{x\cdot r_1q_2\}_p}\overline{e^{2\pi i s_1r_2}e^{2\pi i \{q_1r_2\}_p}}=1
\right\}\text{.}
\end{split}
\end{equation*}
Now if $r_2=0,$ and $r_1=p^n,$ for any $n\in \Z,$ this implies 
\begin{equation*}
\forall n\in\Z\quad e^{2\pi i \theta p^n s_2}e^{2\pi i \{x\cdot p^n q_2\}_p}=1, 
\end{equation*}
so that if we choose $s_2=-\frac{t_2}{\theta}$ for some $t_2\in \Qadic{p}\subseteq \R,$ we need $q_2= x^{-1}t_2.$
Likewise, if we take $r_1=0,$ and $r_2=p^n,$ for any $n\in \Z,$ we need $(q_1,s_1)$ such that 
\begin{equation*}
\forall n \in\Z \quad e^{2\pi i s_1p^n}e^{2\pi i \{q_1 p^n\}_p}=1.
\end{equation*}
Again fixing $q_1=t_1\in \Qadic{p},$ this forces $s_1=-t_1.$
Thus  
\begin{equation*}
D_{x,\theta}^{\perp}=\left\{\left[(t_1,-t_1), \left(x^{-1}t_2,-\frac{t_2}{\theta}\right)\right]: t_1,\;t_2\in\Qadic{p}\right\}\text{,}
\end{equation*}
as we desired to show.
\end{proof}

One thus sees that the two $C^{\ast}$-algebras $C^{\ast}(D_{x,\theta},\eta)$ and $C^{\ast}(D_{x,\theta}^{\perp},\overline{\eta})$ are strongly Morita equivalent (but not isomorphic, in general), and also the proof of this lemma shows that $C^{\ast}(D_{x,\theta}^{\perp},\overline{\eta})$ is a noncommutative solenoid.

We can use Lemma (\ref{a-lemma}) to prove the following Theorem:

\begin{theorem}\label{thmSME}
Let $p$ be prime, and let $\alpha=(\alpha_i)_{i\in\N}\in \Xi_p,$ with $\alpha_0\in (0,1).$   Then there exists $(x,\theta)\in [{\Q}_p\setminus\{0\}]\times [\R\setminus\{0\}]$ with $C^{\ast}(D_{x,\theta},\eta)$  isomorphic to the noncommutative solenoid $\algebra_\alpha$, where $D_{x,\theta} = \iota_{x,\theta} \left(\Qadicsq{p}\right)$.
Moreover, the method of Rieffel produces an equivalence bimodule between  $\algebra_\alpha$ and another unital $C^{\ast}$-algebra ${\mathcal B},$ and ${\mathcal B}$ is itself isomorphic to a noncommutative solenoid.
\end{theorem}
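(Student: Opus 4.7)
The plan is to choose $(x,\theta) \in [\Q_p\setminus\{0\}] \times [\R\setminus\{0\}]$ so that, via the group isomorphism $\iota_{x,\theta}:\Qadicsq{p} \to D_{x,\theta}$, the restriction of the Heisenberg cocycle $\eta$ to $D_{x,\theta}$ pulls back to the canonical multiplier $\Psi_\alpha$ of Theorem (\ref{multipliers}). Once this is achieved, $C^\ast(D_{x,\theta},\eta) \cong C^\ast\left(\Qadicsq{p},\Psi_\alpha\right) = \algebra_\alpha$ follows at once from the functoriality of the twisted group C*-algebra construction.

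The central computation is short. Unwinding the definitions of $\eta$ and $\iota_{x,\theta}$, for any $r_1,r_2,r_3,r_4\in \Qadic{p}$ we have
\begin{equation*}
\eta\bigl(\iota_{x,\theta}(r_1,r_2),\iota_{x,\theta}(r_3,r_4)\bigr) \;=\; e^{2\pi i\theta r_1 r_4}\,e^{2\pi i\{x r_1 r_4\}_p}.
\end{equation*}
Writing $r_1 = q_1/p^{k_1}$, $r_4 = q_4/p^{k_4}$ with $q_1, q_4 \in \Z$, and expanding $x \in \Z_p$ with partial sums $J_N := (x \bmod p^N) \in \Z$, the standard identification $\{x r_1 r_4\}_p \equiv q_1 q_4 J_{k_1+k_4}/p^{k_1+k_4} \pmod{\Z}$ shows that the restricted cocycle equals $\exp\!\bigl(2\pi i\, q_1 q_4 (\theta + J_{k_1+k_4})/p^{k_1+k_4}\bigr)$. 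Comparison with $\Psi_\alpha((r_1,r_2),(r_3,r_4)) = \exp(2\pi i\, \alpha_{k_1+k_4} q_1 q_4)$ then reduces the desired cocycle agreement to the single congruence $(\theta + J_N)/p^N \equiv \alpha_N \pmod{\Z}$ for every $N \in \N$.

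Let $L \in \Z_p$ be the $p$-adic integer supplied by Theorem (\ref{K-theory-thm}), whose $N$-th partial sum is $L_N = p^N\alpha_N - \alpha_0 \in \Z$. The matching condition above is then equivalent to $\theta = \alpha_0 + m$ and $x = L - m$ in $\Z_p$, for a single integer $m \in \Z$ to be chosen. We take $m = 0$ whenever $L \neq 0$ in $\Z_p$ (so $x = L \in \Z_p \setminus \{0\}$), and $m = 1$ in the degenerate case $L = 0$, which corresponds to $\alpha_k = \alpha_0/p^k$, giving $x = -1 \neq 0$. Since $\alpha_0 \in (0,1)$ and $m \in \Z$, we always have $\theta = \alpha_0 + m \not\in \Z$, so in particular $\theta \neq 0$. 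This establishes the first half of the theorem.

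For the Morita equivalence, Rieffel's Theorem \cite[Theorem 2.12]{Rieffel88} applies because $D_{x,\theta}$ is a discrete, cocompact subgroup of the self-dual group $M\times\dual{M} = [\Q_p \times \R]^2$; it produces a strong Morita equivalence bimodule (the completion of $C_c(\Q_p \times \R)$) between $\algebra_\alpha \cong C^\ast(D_{x,\theta},\eta)$ and $\mathcal{B} := C^\ast(D_{x,\theta}^\perp,\overline{\eta})$. By Lemma (\ref{a-lemma}), the map $(t_1,t_2) \mapsto [(t_1,-t_1),(x^{-1}t_2,-\theta^{-1}t_2)]$ realizes $D_{x,\theta}^\perp$ as the isomorphic image of $\Qadicsq{p}$. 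Repeating the cocycle computation of the previous paragraphs on this subgroup---now with $-\theta^{-1} \in \R\setminus\{0\}$ and $x^{-1} \in \Q_p \setminus \{0\}$ playing the roles of $\theta$ and $x$---shows that the pullback of $\overline{\eta}|_{D_{x,\theta}^\perp}$ is a multiplier of the form $\Psi_{\alpha'}$ for an explicit $\alpha' \in \Xi_p$, whence $\mathcal{B} \cong \algebra_{\alpha'}$ is itself a noncommutative solenoid. The chief technical hurdle throughout is the careful tracking of $p$-adic fractional parts as the denominator exponent $k_1+k_4$ varies; the degenerate case $L = 0$, which rules out the ``natural'' choice $x = L$, is handled cleanly by the single-integer shift $m$ above.
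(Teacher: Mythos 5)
Your proposal is correct and follows essentially the same route as the paper: take $\theta=\alpha_0$ and for $x$ the $p$-adic integer whose partial sums are $p^N\alpha_N-\alpha_0$ (the paper's $\sum_j b_jp^j$ built from the carries $p\alpha_{j+1}=\alpha_j+b_j$), verify that the restricted Heisenberg cocycle pulls back to $\Psi_\alpha$, and then combine Rieffel's \cite[Theorem 2.12]{Rieffel88} with Lemma (\ref{a-lemma}) to conclude that ${\mathcal B}=C^{\ast}(D_{x,\theta}^{\perp},\overline{\eta})$ is again a noncommutative solenoid. Your only deviation is the integer shift $m$, which correctly handles the degenerate case $\alpha_k=\alpha_0/p^k$ (all $b_j=0$), where the paper's choice would yield $x=0\notin\Q_p\setminus\{0\}$ --- a small but genuine patch to the published argument.
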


\begin{proof}
By definition of $\Xi_p$, for all $j\in\N$ there exists $b_j \in \{0,\ldots,p-1\}$ such that $p\alpha_{j+1}=\alpha_j+b_j$.  We construct an element of the $p$-adic integers, $x=\sum_{j=0}^{\infty}b_jp^j\in\Z_p\subset \Q_p.$  Let $\theta=\alpha_0,$ and now consider for this specific $x$ and this specific $\theta$ the $C^{\ast}$-algebra $C^{\ast}(D_{x,\theta},\eta)$.  By Definition (\ref{iota-def}), $\iota_{x,\theta}(r_1,r_2)=[(x\cdot r_1,\theta\cdot r_1), (r_2, r_2)],$ for $r_1,r_2\in\Qadic{p}.$
Then 
\begin{equation*}
\begin{split}
\eta\left(\iota_{x,\theta}(r_1,r_2), \iota_{x,\theta}(r_3,r_4)\right)&=\eta\left(\left[\left(x\cdot r_1,\theta\cdot r_1\right), \left(r_2, r_2\right)\right],\left[\left(x\cdot r_3,\theta\cdot r_3\right), \left(r_4, r_4\right)\right]\right)\\
&=e^{2\pi i \theta r_1r_4}e^{2\pi i \{xr_1r_4\}_p},\;r_1, r_2, r_3, r_4\in \Qadic{p}\text{,}
\end{split}
\end{equation*}
and, setting $r_i=\frac{j_i}{p^{k_i}},\;1\leq i\leq 4,$ and setting $\theta=\alpha_0,$ we obtain 
\begin{equation*}
\eta\left(\iota_{x,\alpha_0}\left(\frac{j_1}{p^{k_1}},\frac{j_2}{p^{k_2}}\right), \iota_{x,\alpha_0}\left(\frac{j_3}{p^{k_3}},\frac{j_4}{p^{k_4}}\right)\right)
=e^{2\pi i \alpha_0 \frac{j_1j_4}{p^{k_1+k_4}}}e^{2\pi i \{x\frac{j_1j_4}{p^{k_1+k_4}}\}_p}
\end{equation*}
for all
\begin{equation*}
\frac{j_1}{p^{k_1}}, \frac{j_2}{p^{k_2}}, \frac{j_3}{p^{k_3}}, \frac{j_4}{p^{k_4}} \in\Qadic{p}\text{.}
\end{equation*}
We now note that the relation $p\alpha_{j+1}=\alpha_j+b_j,\; b_j\in\;\{0,1,\cdots, p-1\}$ allows us to prove inductively that 
\begin{equation*}
\forall n\geq 1\quad \alpha_n=\frac{\alpha_0+\sum_{j=0}^{n-1}b_jp^j}{p^n}.
\end{equation*}
By Theorem (\ref{multipliers}), the multiplier $\Psi_{\alpha}$ on $\Qadicsq{p}$ is defined by:
\begin{equation*}
\begin{split}
\Psi_{\alpha}\left(\left(\frac{j_1}{p^{k_1}},\frac{j_2}{p^{k_2}}\right),\left(\frac{j_3}{p^{k_3}},\frac{j_4}{p^{k_4}}\right)\right)&=e^{2\pi i(\alpha_{(k_1+k_4)}j_1j_4)}\\
&=e^{2\pi i\frac{\alpha_0j_1j_4}{p^{k_1+k_4}}}e^{2\pi i (\sum_{j=0}^{k_1+k_4-1}b_jp^jj_1j_4)/p^{k_1+k_2}}.
\end{split}
\end{equation*}
A $p$-adic calculation now shows that for $\frac{j_1}{p^{k_1}}$ and $\frac{j_4}{p^{k_4}}\in \Qadic{p}$  and $x=\sum_{j=0}^{\infty}b_jp^j\in\Z_p,$ we have 
$\{x\frac{j_1j_4}{p^{k_1+k_4}}\}_p=(\sum_{j=0}^{k_1+k_4-1}b_jp^j)\cdot \frac{j_1j_4}{p^{k_1+k_2}}$ modulo $\Z,$ so that 
\begin{equation*}
e^{2\pi i \{x\frac{j_1j_4}{p^{k_1+k_4}}\}_p}=e^{2\pi i (\sum_{j=0}^{k_1+k_4-1}b_jp^jj_1j_4)/p^{k_1+k_2}}.
\end{equation*}
We thus obtain 
\begin{equation*}
\eta(\iota_{x,\theta}(r_1,r_2), \iota_{x,\theta}(r_3,r_4))=\Psi_{\alpha}((r_1,r_2),(r_3,r_4))
\end{equation*}
for all $ r_1,r_2,r_3,r_4\in \Qadic{p}$,as desired.

To prove the final statement of the Theorem, we use Lemma \ref{a-lemma}.  We have shown $\algebra_\alpha$ is isomorphic to $C^{\ast}(D_{x,\theta},\eta),$ and the discussion prior to the statement of  Lemma \ref{a-lemma} shows that $C^{\ast}(D_{x,\theta},\eta)$ is strongly Morita equivalent to $C^{\ast}(D_{x,\theta}^{\perp},\overline{\eta})={\mathcal B}.$ But the proof of Lemma \ref{a-lemma} gives that $D_{x,\theta}^{\perp}$ is isomorphic to $\Qadicsq{p},$ so that  $C^{\ast}(D_{x,\theta}^{\perp},\overline{\eta})={\mathcal B}$ is a noncommutative solenoid, as we desired to show.
\end{proof}
\begin{remark} It remains an open question to give necessary and sufficient conditions under which two noncommutative solenoids $\algebra_\alpha$ and $\algebra_\beta$ would be strongly Morita equivalent, although it is evident that a necessary that the range of the trace on $K_0$ of one of the $C^{\ast}$-algebras should be a constant multiple of the range of the trace on the $K_0$ group of the other. By changing the value of $\theta$ to be $\alpha_0+j,\; j\in \Z,$ and adjusting the value of $x\in \Q_p$ accordingly, one can use the method of Theorem \ref{thmSME} to construct a variety of embeddings $\iota_{x,\theta}$ of $\Qadicsq{p}$ into $[\Q_p\times \R]^2$ that provide lattices $D_{x,\theta}$ such that $C^{\ast}(D_{x,\theta},\eta)$ and $\algebra_\alpha$ are $\ast$-isomorphic, but such that the strongly Morita equivalent solenoids $C^{\ast}(D_{x,\theta}^{\perp},\overline{\eta})$ vary in structure.  This might lend some insight into classifying the noncommutative solenoids up to strong Morita equivalence, as might a study between the relationship between the two different methods of building equivalence modules described in Sections 4 and 5.
\end{remark}
\begin{remark}  In the case where the lattice $\Z^{2n}$ embeds into $\R^n \times \dual{ \R^n},$ F. Luef has used the Heisenberg equivalence bimodule construction of Rieffel to construct different families of Gabor frames in modulation spaces of $L^2(\R^n)$ for modulation and translation by $\Z^n$ (\cite{Luef09}, \cite{Luef11}). It is of interest to see how far this analogy can be taken when studying modulation and translation operators of $\Qadic{p}$ acting on $L^2(\Q_p\times \R),$ and we are working on this problem at present.
\end{remark}


\providecommand{\bysame}{\leavevmode\hbox to3em{\hrulefill}\thinspace}
\providecommand{\MR}{\relax\ifhmode\unskip\space\fi MR }
\providecommand{\MRhref}[2]{%
  \href{http://www.ams.org/mathscinet-getitem?mr=#1}{#2}
}
\providecommand{\href}[2]{#2}

\vfill

\end{document}